\documentclass[11pt]{article}
\usepackage[T1]{fontenc}
\usepackage[utf8]{inputenc}
\usepackage{amsmath,amssymb,amsthm}
\usepackage[a4paper,margin=1in]{geometry}
\usepackage{microtype}
\usepackage{indentfirst}
\usepackage{needspace}
\usepackage{xcolor}
\usepackage{xurl}
\DeclareRobustCommand{\rev}[1]{#1}
\newenvironment{revision}{\begingroup}{\par\endgroup}

\newcommand{\eps}{\varepsilon}
\newcommand{\dd}{\,\mathrm{d}}

\newcommand{\Int}{\operatorname{Int}}

\newcommand{\Ai}{\operatorname{Ai}}

\newtheorem{theorem}{Theorem}[section]
\newtheorem{conjecture}[theorem]{Conjecture}
\newtheorem{lemma}[theorem]{Lemma}
\newtheorem{proposition}[theorem]{Proposition}
\newtheorem{corollary}[theorem]{Corollary}
\newtheorem{definition}[theorem]{Definition}

\newtheorem{remark}[theorem]{Remark}
\usepackage{graphicx}

\title{Limit cycles of slow--fast quintic Liénard systems}
\author{\rev{\normalsize HEBAI CHEN$^{1}$, LESHAN CHEN$^{1}$, YU XIAO$^{1}$}\\[1em]
\rev{\small\itshape $^{1}$School of Mathematics and Statistics, Central South University,}\\
\rev{\small\itshape Changsha, Hunan 410083, China}\thanks{Email: \protect\url{chen_hebai@csu.edu.cn} (Hebai Chen),\ \protect\url{262107002@csu.edu.cn} (Leshan Chen),\ \protect\url{xy.math@csu.edu.cn} (Yu Xiao) }}
\date{}

\begin{document}
\maketitle

\begin{abstract}
\begin{revision}
For every fixed real quintic polynomial $F$, we prove that the slow--fast Liénard system
$\dot{x}=y-F(x),\quad \dot{y}=-\varepsilon x$
admits at most two limit cycles whenever $0<\varepsilon<\varepsilon_0(F)$ for a suitable $\varepsilon_0(F)>0$.
Using scalar comparison near the equilibrium and a logarithmic fold-passage correction, we exclude \rev{canard singular limits}. Every sequence of limit cycles with $\varepsilon\to0^+$ has a subsequence converging in the Hausdorff metric to a common slow--fast path, whose regular slow arcs are either all normally attracting or all normally repelling. We also establish uniform Floquet separation for fixed $F$. The global counting argument uses shared regular arcs, nesting, and stability alternation. These results give a global small-parameter bound and describe the singular-limit structure, complementing the local cyclicity analysis of Li and Lu\cite{LiLu2014}.
\end{revision}

\medskip
\noindent\textbf{Keywords:} Liénard equation; limit cycle; singular perturbation; slow divergence integral.
\end{abstract}
\thispagestyle{plain}

\section{Introduction and the main result}\label{sec:introduction}

The problem of determining the number of limit cycles of polynomial Liénard systems
\begin{equation}
	\begin{aligned}
		\frac{dx}{dt} &= y-F(x), \\
		\frac{dy}{dt} &= -x,
	\end{aligned}
	\label{lie}
\end{equation}
constitutes an important special case of Hilbert's sixteenth problem. Let $L(n)$ denote the maximal number of limit cycles among all systems \eqref{lie} with $\deg F=n$. In 1977, Lins Neto, de Melo and Pugh showed that $L(n)\geq \lfloor (n-1)/2 \rfloor$ and conjectured that equality holds \cite{LinsMeloPugh1977}. For systems of degree at most four, the sharp bounds are known: $L(n)=0$ for $n\leq 2$, $L(3)=1$, and $L(4)=1$, with the quartic case settled by Li and Llibre \cite{LiLlibre2012}. However, the general quintic case is more difficult. Rychkov proved that odd quintic systems have at most two limit cycles \cite{Rychkov1975}. \begin{revision}
The lower bound $L(5)\geq2$ is classical; the historical status of the general upper-bound problem is reviewed in~\cite{LlibreZhang2017}.
\end{revision}


\begin{revision}
Dumortier, Panazzolo and Roussarie first disproved the Lins--de Melo--Pugh conjecture in higher degrees, constructing four limit cycles for a degree-seven Liénard system~\cite{Dumortier2007}. De Maesschalck and Dumortier subsequently extended the counterexamples to degree six~\cite{DeMaesschalckDumortier2011}. These constructions illustrate the role of slow--fast dynamics in limit-cycle bifurcations. Fenichel theory describes normally hyperbolic slow manifolds~\cite{Fenichel1979}, while blow-up and slow-divergence methods analyze folds and canard cycles~\cite{DumortierRoussarie1996,KrupaSzmolyan2001,KrupaSzmolyan2001Relaxation}. 
\end{revision}

In this paper, we study the quintic Liénard system
\begin{equation}
	\begin{aligned}
		\frac{dx}{dt} &= y-\delta F(x), \\
		\frac{dy}{dt} &= -x,
	\end{aligned}
\label{eq:large-delta}
\end{equation}
\rev{where $\delta$ is sufficiently large and $F$ is a fixed real quintic polynomial. By the equivalent normalization described below, we may assume $F(0)=0$ and a positive leading coefficient, and write}
\begin{equation}\label{eq:polynomial}
F(x)=a_1x+a_2x^2+a_3x^3+a_4x^4+a_5x^5,\qquad a_5>0.
\end{equation}
\rev{Setting $Y=y/\delta$, $s=\delta t$, and $\varepsilon=\delta^{-2}$, and then relabelling $Y$ and $s$ as $y$ and $t$, respectively, transforms system~\eqref{eq:large-delta} into}
\begin{equation}
	\begin{aligned}
		\frac{dx}{dt} &= y-F(x), \\
		\frac{dy}{dt} &= -\varepsilon x,
	\end{aligned}
	\label{eq:system}
\end{equation}
\begin{revision}
For a general quintic polynomial in~\eqref{eq:system}, set
\[
\bar y=y-F(0),\qquad \bar F(x)=F(x)-F(0).
\]
Then $\dot x=\bar y-\bar F(x)$ and $\dot{\bar y}=-\varepsilon x$. If the leading coefficient is negative, set $s=-t$, $Y=-\bar y$, and $\widetilde F=-\bar F$. These changes preserve the number of periodic orbits, and reversing time only exchanges their stability. Thus it suffices to prove the theorem under~\eqref{eq:polynomial}.
\end{revision}

Since $\varepsilon\rightarrow 0$ as $\delta$ tends to infinity, \eqref{eq:system} is a slow--fast system that can be studied using methods from geometric singular perturbation theory.

Setting $\varepsilon=0$ in \eqref{eq:system}, we obtain the
\textbf{layer equation}
\begin{equation}
	\frac{dx}{dt}=y-F(x), \qquad
	\frac{dy}{dt}=0.
\end{equation}
Its equilibria form the \textbf{critical manifold}
\[
\rev{\mathcal{C}_0=\{(x,y):y=F(x)\}.}
\]
Introducing the slow time $\tau=\varepsilon t$ and letting
$\varepsilon=0$, we obtain the \textbf{reduced equation}
\begin{equation}
	F'(x)\frac{dx}{d\tau}=-x,
	\qquad y=F(x),
\label{eq:reduced}
\end{equation}
which describes the slow dynamics along $\mathcal{C}_0$
\rev{on its normally hyperbolic branches, where $F'(x)\neq0$.} The critical manifold is
\textbf{normally attracting} where $F'(x)>0$ and
\textbf{normally repelling} where $F'(x)<0$.
A \textbf{slow--fast orbit} is formed by concatenating
trajectories of the reduced slow subsystem and the fast
subsystem. If such an orbit forms a closed curve, it is
called a \textbf{slow--fast cycle}.

\begin{revision}
For the slow--fast quintic Liénard system~\eqref{eq:system}, Li and Lu~\cite[Theorem 3.1]{LiLu2014} proved that the cyclicity of any prescribed nondegenerate slow--fast cycle is at most two; their Remark 3.2 also discusses several degenerate configurations. This local cyclicity result does not by itself bound the total number of limit cycles. For every fixed $F$, we establish a global small-parameter upper bound by compactness, exclusion of \rev{canard singular limits}, and an independent counting argument based on shared slow arcs and nesting.
\end{revision}
Let $N(F,\varepsilon)$ denote the number of limit cycles of system \eqref{eq:system}. We prove the following theorem.
\begin{theorem}\label{thm:main}
For every fixed real polynomial $F$ of degree five, there exists $\varepsilon_0(F)>0$ such that, for $0<\varepsilon<\varepsilon_0(F)$,
\[
N(F,\varepsilon)\leq 2.
\]
\end{theorem}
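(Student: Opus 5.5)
We argue by contradiction, via compactness. Suppose there are $\varepsilon_k\to0^+$ with $N(F,\varepsilon_k)\ge3$, and choose three distinct limit cycles $\Gamma_k^1,\Gamma_k^2,\Gamma_k^3$ of \eqref{eq:system} at $\varepsilon=\varepsilon_k$. The origin is the only equilibrium, and each $\Gamma_k^i$ surrounds it, so the three cycles are nested.

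\textbf{Step 1: uniform boundedness.} Since $a_5>0$, for $|x|$ large the vector field is dominated by the $-F(x)$ term in $\dot x$. A standard Liénard trapping-region argument, with the region chosen independently of small $\varepsilon$, confines all cycles to a fixed compact set $K$.

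\textbf{Step 2: Hausdorff limits.} By Blaschke's theorem we may pass to a subsequence along which each $\Gamma_k^i$ converges in the Hausdorff metric to a compact set $\Gamma_0^i$. Away from $\mathcal C_0$ the flow is fast and horizontal, so each $\Gamma_0^i$ is a closed curve made of horizontal fast segments and pieces of $\mathcal C_0$, that is, a slow--fast path.

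\textbf{Step 3: excluding degenerate limits.} Two possibilities must be ruled out.
\begin{itemize}
\item Collapse to the origin: $\Gamma_0^i=\{(0,0)\}$, or a limit that contains a piece of $\mathcal C_0$ through the turning point $x=0$.
\item Canard limits: a slow segment of $\Gamma_0^i$ continues through a fold point, where $F'=0$, from an attracting branch onto a repelling branch.
\end{itemize}
Near the origin, I would show that small cycles cannot exist by comparing $x\,\dot x$ with a scalar function of the energy $x^2/2+y^2/(2\varepsilon)$ when $F'(0)\neq0$. When $F'(0)=0$, I would instead use the sign of the first nonvanishing derivative together with a rescaling. For canards, I would use the slow divergence integral $\int F'(x)^2/x\,dx$ together with a logarithmic correction for the fold passage, of size $O(\varepsilon\log\varepsilon)$ in the entry--exit map. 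The aim is to show that a canard limit forces the divergence along $\Gamma_k^i$ to be $o(1)$ after normalization, while the entry--exit relation forces it to be bounded away from zero. I expect this step, particularly for degenerate folds where $F'$ has a double zero, to be the main obstacle. My first attempt would be to treat each fold of $\mathcal C_0$ by the blow-up normal forms of Krupa--Szmolyan.

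\textbf{Step 4: the regular case and Floquet separation.} Once canards are excluded, every regular slow arc of $\Gamma_0^i$ lies on a normally hyperbolic branch. The limit is then a relaxation-type cycle: it jumps at folds and lands on branches of the same attraction type. By the liénard symmetry of jumps at fixed $y$, the attracting and repelling arcs cannot be mixed consistently. The Floquet exponent $\oint \operatorname{div}\,dt=-\oint F'(x)\,dt$ is therefore asymptotic to $\varepsilon^{-1}$ times a slow-divergence integral of fixed nonzero sign. Hence each $\Gamma_k^i$ is hyperbolic, with a sign determined by its limit, and the limit is uniform for fixed $F$.

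\textbf{Step 5: counting.} Two nested hyperbolic cycles that are consecutive in the annular order, with no cycle between them, must have opposite stability. Three nested cycles therefore need two consecutive pairs of opposite stability types, so both attracting-type and repelling-type limits must occur. For a quintic, $F'$ has at most four real zeros, so $\mathcal C_0$ has at most five monotone branches. I would check case by case that limits of nested cycles must share regular arcs. Since the type is determined by the shared arc, nested limits sharing an arc have the same stability. Any two distinct Hausdorff limits are either nested-disjoint or share arcs, and a combinatorial enumeration over the fold configurations should show that at most two attainable limit paths are compatible with alternation. The shared-arc argument then forces two of the three cycles to have the same stability while being adjacent, which is a contradiction.

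Finally, the normalization preceding \eqref{eq:polynomial} extends the bound to general quintic $F$.
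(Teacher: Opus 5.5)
Your outline has the same architecture as the paper's: compactness, no collapse, no canard limits, Floquet separation from the slow divergence integral, then alternation. However, the two steps that carry the weight rest on claims that are false or unsupported.

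In Step~3, a canard limit does not force $\varepsilon\log\rho$ to be $o(1)$. Along a convergent sequence it tends to $I(C)=\sum\int F'(x)^2/x\,dx$ over the critical arcs of the limit $C$. When $C$ has arcs of both stabilities, this sum can have either sign. The divergence governs stability, not whether a configuration can arise as a limit, so your dichotomy cannot exclude canards. What is missing is an obstruction to a single orbit following both an attracting and a repelling slow manifold.

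First classify the transitions. Along the slow flow $y$ is monotone in each half-plane, so a fold with $x\neq0$ cannot be crossed continuously. Stationary inflections do not change stability. Hence an attracting-to-repelling change happens either continuously through an extremum at the origin, or along a horizontal segment joining two equal-height extrema of the same type, one possibly at the origin. Degree five forces these two extrema to be quadratic, so no degenerate-fold blow-up is needed.

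Each case then has its own mechanism:
\begin{itemize}
\item The continuous case is excluded by comparing the half-orbits through $\varepsilon\,dw_\pm/dy=F_\pm(w_\pm)-y$, $w=x^2/2$, using the fixed sign of $F(r)-F(-r)=2r^3(a_3+a_5r^2)$ near $0$.
\item The origin-to-fold case is excluded by a clash of bounds. The nonzero fold forces $q_\varepsilon\le-\tfrac12c_1\varepsilon^{2/3}$, while linear comparison at the quadratic extremum, where $F_\pm(w)\ge\alpha w$, forces $q_\varepsilon>-2\varepsilon/\alpha$.
\item For two nonzero folds you need the exit expansion $-\kappa(c^2/a)^{1/3}\varepsilon^{2/3}-\tfrac B3\varepsilon\log\varepsilon+O(\varepsilon)$ on both sides. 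In the normal form $P(X)=(X^2-1)^2(1+eX)$, $\dot Y=-\varepsilon(X-\lambda)$, the $\varepsilon^{2/3}$ coefficients agree only at $\lambda=\lambda_*$, and there the $\varepsilon\log\varepsilon$ coefficients differ. So the forward-continued attracting and backward-continued repelling slow manifolds are at least of order $\varepsilon|\log\varepsilon|$ apart at $X=\lambda$, whereas a periodic orbit near the connection is exponentially close to both.
\end{itemize}

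In Step~5, the claim that limits of nested cycles must share regular arcs is false for limits of opposite type. For $F_*$ the outer attracting and inner repelling common cycles are disjoint, and if the claim held for adjacent cycles it would give $N\le1$. The correct statement is type-restricted: two common limits of the same type share a positive-length regular arc of that type. It follows from four facts:
\begin{itemize}
\item With $a_5>0$ there are at most three attracting and two repelling maximal monotonicity intervals, merged across stationary inflections.
\item A common path, run forward for the attracting type and backward for the repelling type, follows any interval of its type that it enters up to the endpoint nearest the origin.
\item An interval with the origin in its interior is excluded by Lemma~\ref{lem:origin-odd}.
\item Every limit has a regular arc in each half-plane.
\end{itemize}
Pigeonhole then forces a shared interval.

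The contradiction comes from a sandwich, not from enumerating attainable limit paths. Take three nested cycles whose inner and outer members have one stability and whose middle member has the other. The inner and outer limits share an arc $x=\chi(y)$, $y\in J$. The nesting $x_{1,+,n}\le x_{2,+,n}\le x_{3,+,n}$ on $J$ forces the middle limit to contain that arc, hence to be of the same type. This contradicts the sign of its divergence limit.
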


\begin{figure}[h]
	\centering
	\includegraphics[width=0.5\linewidth,trim=200bp 330bp 175bp 525bp,clip]{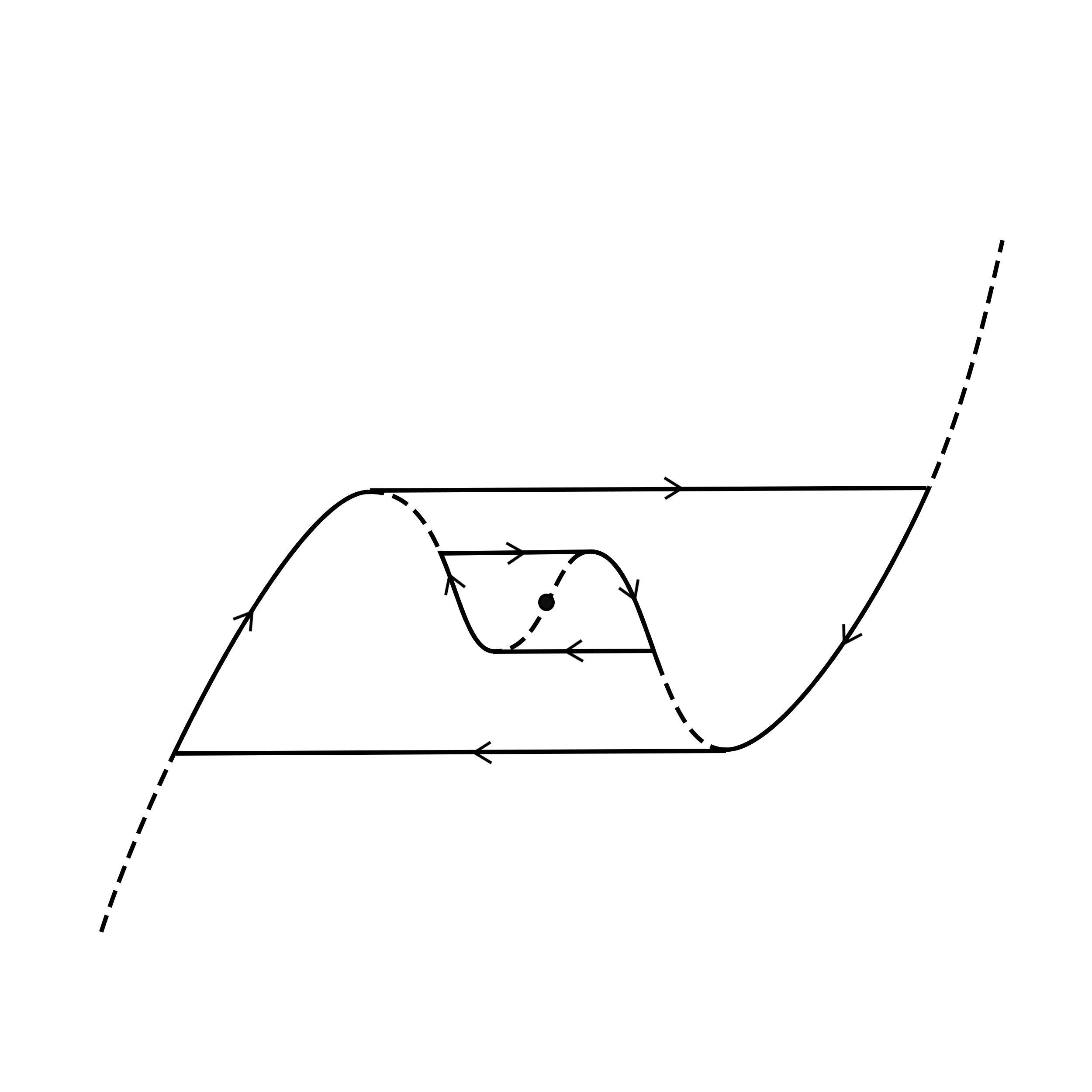}
\caption{\rev{Two common slow--fast cycles for system~\eqref{eq:system} with $F=F_*$.}}
	\label{fig:5}
\end{figure}

\begin{revision}
The bound is sharp. For the fixed polynomial $F_*(x)=3x^5-50x^3+135x$, Figure~\ref{fig:5} schematically shows an outer attracting and an inner repelling common slow--fast cycle. Both have ordinary folds, regular landing points, and no equilibrium on their slow arcs. Relaxation-cycle persistence~\cite[Theorem 2.1]{KrupaSzmolyan2001Relaxation}, applied forward in time to the outer cycle and in reversed time to the inner cycle, gives one attracting and one repelling limit cycle for all sufficiently small $\varepsilon>0$; see also~\cite[Theorem 1]{DeMaesschalckDumortierRoussarie2011}. Together with Theorem~\ref{thm:main}, this yields $N(F_*,\varepsilon)=2$.

We briefly outline the proof for a fixed $F$. The main difficulty is to control all possible limiting configurations of limit cycles as $\varepsilon\to0^+$. Compactness and scalar comparison exclude escape to infinity and collapse at the equilibrium. Fold passage, including its logarithmic correction, excludes \rev{canard limits}. The divergence integral then gives uniform Floquet separation and hyperbolicity for all periodic orbits at sufficiently small $\varepsilon$. These results reduce the global counting problem to the geometry of common singular limits. The shared-arc property in Lemma~\ref{lem:shared-arc}, together with nesting and stability alternation, yields the bound $N(F,\varepsilon)\leq2$.

\end{revision}

\Needspace{9\baselineskip}
\begin{revision}
The sharpness example above motivates the following conjecture in higher degrees. Here $L(n)=\sup_{\deg F=n}N(F,\varepsilon)$ for any fixed $\varepsilon>0$, by rescaling.
\begin{conjecture}\label{conj:attainment}
For every integer $n\geq6$, there exist a real polynomial $F_n$ of degree $n$ and $\varepsilon_0(F_n)>0$ such that
\[
L(n)=N(F_n,\varepsilon),\qquad 0<\varepsilon<\varepsilon_0(F_n).
\]
\end{conjecture}
A further question is to determine the sharp upper bound, in terms of $n$, on $N(F,\varepsilon)$ for each fixed real polynomial $F$ of degree $n\geq6$ and all sufficiently small $\varepsilon>0$. Resolving this question may shed light on the general problem of determining $L(n)$.
\end{revision}

The remainder of this paper is organized as follows. In Section 2, we exclude the possible singular limits of limit cycles near the origin, at infinity, and along canard slow--fast configurations. Section 3 establishes the hyperbolicity of all limit cycles for sufficiently small $\varepsilon$. Finally, Section 4 completes the proof of Theorem \ref{thm:main}.

\section{Singular limits of limit cycles}\label{sec:singular}
We first establish compactness and exclude collapse at the equilibrium. We then describe the possible singular limits and rule out changes of normal stability by scalar comparison and a fold-passage expansion.

\subsection{Geometry, compactness, and the equilibrium}\label{sec:geometry}
\subsubsection{Half-orbits and compactness}
The origin is the unique equilibrium of~\eqref{eq:system}. Every periodic orbit surrounds it, by the index formula, and distinct periodic orbits are nested. On the positive $y$-axis the vector field points to the right, and on the negative $y$-axis it points to the left. The intersections of an oriented Jordan curve with a ray alternate between entrance and exit. Hence a periodic orbit intersects each of these half-axes exactly once.

Write the intersections as $P=(0,p)$ and $Q=(0,q)$, where $p>0>q$. On the right half-orbit $y$ decreases from $p$ to $q$, whereas on the left half-orbit it increases from $q$ to $p$. For $v=y-F(x)$, direct differentiation along~\eqref{eq:system} gives
\begin{equation}\label{eq:v-dot}
\dot v=-\varepsilon x-F'(x)v.
\end{equation} At a zero of $v$ in $x>0$, one has $\dot v<0$; in $x<0$, one has $\dot v>0$. Thus the right half-orbit has exactly one maximum of $x$, and the left half-orbit has exactly one minimum. Denote them by $a>0$ and $b<0$.

The half-orbits are graphs $x=x_+(y)>0$ and $x=x_-(y)<0$, $q<y<p$. The functions $x_+$ and $-x_-$ are unimodal: each increases and then decreases as $y$ runs from $q$ to $p$. Their total variations are $2a$ and $2|b|$. Every vertical line $x=c$, $b<c<a$, meets the outgoing and returning parts at heights on opposite sides of $F(c)$. Consequently,
\begin{equation}\label{eq:height-between}
q<F(c)<p,\qquad b<c<a.
\end{equation}

\begin{lemma}\label{lem:compact}
For a fixed polynomial satisfying~\eqref{eq:polynomial}, there are a compact rectangle $K$ and $\eps_1>0$ such that every periodic orbit of~\eqref{eq:system}, with $0<\eps\leq\eps_1$, is contained in $K$.
\end{lemma}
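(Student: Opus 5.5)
The plan is to use the Liénard-type energy
\[
W(x,y)=\tfrac12 y^2+\tfrac{\eps}{2}x^2 ,
\]
for which a direct computation along~\eqref{eq:system} gives
\[
\dot W=\eps x(y-F(x))-\eps xy=-\eps\,xF(x).
\]
Since $a_5>0$, we have $xF(x)\geq c_0x^6$ for $|x|\geq R$, with suitable $R,c_0>0$, and $|xF(x)|\leq M$ for $|x|\leq R$. For a periodic orbit $\gamma$ of period $T$ we have $\oint\dot W\,dt=0$, hence
\begin{equation*}
\int_{\{t:\,|x(t)|\geq R\}} xF(x)\,dt=-\int_{\{t:\,|x(t)|<R\}} xF(x)\,dt\leq M\cdot\bigl|\{t:|x(t)|<R\}\bigr| .
\end{equation*}
The argument therefore reduces to comparing the time $\gamma$ spends in the vertical strip $S_R=\{|x|<R\}$ with the time it spends outside, weighted by $x^6$.

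\textbf{Step 1: reduce to bounding the strip time.} I will use the half-orbit geometry established before the lemma: intersections $P=(0,p)$ and $Q=(0,q)$, unimodal graphs $x_\pm(y)$, extremes $a>0>b$, and~\eqref{eq:height-between}. Suppose $a>R$. On the part of the right half-orbit with $x\in[R,a]$, the coordinate $y$ must decrease through the range where the orbit turns, and $\dot y=-\eps x$. Hence the time spent at $x\geq R$ is at least of order $(\text{drop in }y)/(\eps a)$. Conversely, $\dot x=y-F(x)$, so the time spent in $S_R$ is at most $\int dx/|y-F(x)|$ over the passages through the strip.

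\textbf{Step 2: bound $a$ and $|b|$.} I expect to show that for a large orbit the strip passages are fast, taking time $O(R/\min_{S_R}|y-F|)$. The orbit outside the strip, by contrast, contains a slow portion near the attracting outer branches $y=F(x)$, $|x|\geq R$, where $F'>0$. That portion takes slow time of order $1/\eps$, during which $xF\geq c_0R^6$. This makes the left-hand side of the displayed balance exceed the right-hand side once $a$ or $|b|$ exceeds some $R_1$ independent of $\eps\leq\eps_1$, which is a contradiction. Once $a,|b|\leq R_1$ are known, the heights are controlled by comparison. Indeed, $q<F(c)<p$ by~\eqref{eq:height-between}. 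Moreover, on the right half-orbit $y$ can only change by $\int\eps x\,dt$, while horizontal travel at height far from the graph of $F$ is fast. This bounds $p$ by $\max_{[-R_1,R_1]}|F|$ plus a constant, and bounds $q$ similarly. The result is a rectangle $K=[-R_1,R_1]\times[-H,H]$.

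\textbf{Main obstacle.} The hardest point is the lower bound $\min_{S_R}|y-F(x)|$ on the strip passages. Near the turn at $Q$ the returning branch can lie close to the graph of $F$, so a direct estimate is not available there. To handle this, I would first try to exploit the sign structure of~\eqref{eq:v-dot}: inside $S_R$, $|\dot v|\leq \eps R+C|v|$, which gives a Grönwall bound on how long $v$ can stay small. Failing that, I would split the orbit at the times it enters and leaves $S_R$. For each piece I would estimate the change $\Delta W$, using that $|\Delta y|\leq \eps R\cdot(\text{time})$ inside the strip, so that a long stay in the strip costs only a small change in $W$. This would be compared with the definite decrease of $W$ of order $c_0R^6\cdot(\text{drop in }y)/a$ accumulated on each outer slow arc.
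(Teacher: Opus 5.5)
There is a genuine gap. Everything hinges on controlling the passages through the strip $|x|<R$ uniformly in the orbit and in $\eps$, and you leave exactly this open under ``Main obstacle''. Step~2 expects strip passages to be fast, with duration $O(R/\min|y-F|)$, but this cannot be established because it fails for the local dynamics. For a quintic with four real critical points, a trajectory dropping off the right outer fold can land on the inner attracting branch inside $|x|<R$. It then tracks that branch, with $y-F(x)=O(\eps)$, for a time of order $1/\eps$. Near $x=0$, where $\dot y=-\eps x$ is tiny, the time spent is not controlled by the height change at all.

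Your remedies do not close this. Grönwall applied to $|\dot v|\le\eps R+C|v|$ bounds the growth of $|v|$, so it gives a lower bound, not an upper bound, on how long $v$ stays small. In the fallback, a long stay does not automatically cost little. Since $\dot W=F(x)\dot y$, one has $dW=F(x)\,dy$, so a passage can change $W$ by up to $\max_{[-R,R]}|F|$ times its total variation in $y$. Bounding that variation, equivalently bounding $q$ from below and $p$ from above, is precisely the unresolved point.

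The missing idea is to bound heights, not times, near the lower crossing $Q$. Take $R\ge1$ with $F'>0$ for $|x|\ge R$, and set $C=\sup_{|x|\ge R}|x|/F'(x)$ and $m=\min_{[0,R]}F$. Choose $\eps_1$ with $\eps_1C\le1$ and $\eps_1R^2\le1$.

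On the returning part of the right half-orbit with $x\ge R$, the band $-\eps C\le y-F(x)\le0$ is positively invariant by~\eqref{eq:v-dot}. So the orbit reaches $x=R$ at height at least $F(R)-\eps C\ge m-1$; if $a<R$, the turning height $F(a)$ is already at least $m$. Inside $0\le x\le R$, as soon as $y\le m-1$ you have $\dot x\le-1$. The axis is then reached within time $R$, and $y$ falls by at most $\eps R^2$ more; hence $q\ge m-2$.

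The symmetry $(x,y)\mapsto(-x,-y)$, which replaces $F(x)$ by $-F(-x)$, gives $p\le\max_{[-R,0]}F+2$. Finally,~\eqref{eq:height-between} puts $F(a)$ and $F(b)$ in $[q,p]$, and properness of $F$ bounds $a$ and $b$.

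This is the paper's argument. Once you have it, the energy balance is superfluous. Without it, or an equivalent bound on the $y$-variation across each strip passage, your Step~2 cannot be completed.
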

\begin{proof}
Choose $R\geq1$ so that $F'(x)>0$ for $|x|\geq R$, and put
\[
C=\sup_{|x|\geq R}\frac{|x|}{F'(x)}<\infty,\qquad m=\min_{[0,R]}F,\qquad M=\max_{[-R,0]}F.
\]
Take $\eps_1$ so that $\eps_1C\leq1$ and $\eps_1R^2\leq1$.

Suppose first that the rightmost point satisfies $a\geq R$. On the returning segment with $x\geq R$, the strip $-\eps C\leq v\leq0$ is positively invariant: at its upper edge~\eqref{eq:v-dot} is negative, and at its lower edge it equals $\eps(CF'(x)-x)\geq0$. The trajectory starts with $v=0$ at $x=a$. Hence, upon reaching $x=R$, it has $y\geq F(R)-\eps C\geq m-1$. If $a<R$, its turning height is already at least $m$.

On the remaining segment in $0\leq x\leq R$, once $y$ reaches $m-1$, one has $\dot x\leq-1$. The time to the axis is at most $R$, so the additional decrease of $y$ is at most $\eps R^2\leq1$. It follows that $q\geq m-2$. Applying the same argument to $(x,y)\mapsto(-x,-y)$ gives $p\leq M+2$. Finally $F(a),F(b)\in[m-2,M+2]$, and the polynomial $F$ is proper. This bounds $a$ and $b$ independently of $\eps$.
\end{proof}

\subsubsection{Exclusion of collapse at the origin}
We also use the energy
\[
E(x,y)=\frac{\varepsilon x^2+y^2}{2},
\]
which satisfies
\begin{equation}\label{eq:energy}
\dot E=-\varepsilon xF(x).
\end{equation}

\begin{lemma}\label{lem:origin-odd}
If the first nonzero term of $F$ at the origin has odd degree, then some fixed neighborhood of the origin is disjoint from every periodic orbit for all sufficiently small positive $\eps$.
\end{lemma}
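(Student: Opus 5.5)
The plan is to combine the energy identity~\eqref{eq:energy} with a trapping argument. Write $F(x)=a_kx^k+O(x^{k+1})$ with $k\in\{1,3,5\}$ odd and $a_k\neq0$. Then
\[
xF(x)=a_kx^{k+1}\bigl(1+O(x)\bigr),\qquad F'(x)=ka_kx^{k-1}\bigl(1+O(x)\bigr),
\]
with $k+1$ and $k-1$ even. Hence there is $r_0>0$ such that, on $0<|x|<r_0$, both $xF(x)$ and $F'(x)$ have the sign of $a_k$. Reversing time with $(t,y)\mapsto(-t,-y)$, $F\mapsto -F$, as in the normalization of Section~\ref{sec:introduction}, changes the sign of $a_k$ and preserves periodic orbits. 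So I treat $a_k>0$, where the critical manifold near the origin is normally attracting away from $x=0$. The case $a_k<0$ follows by applying the same argument in reversed time, with ``positively invariant'' replaced by ``negatively invariant''.

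\emph{Step 1 (no periodic orbit inside the strip).} Suppose a periodic orbit $\gamma$ of period $T$ lies in $\{|x|<r_0\}$. Integrating~\eqref{eq:energy} over one period gives
\[
0=-\eps\int_0^TxF(x)\dd t .
\]
The integrand is nonnegative and vanishes only where $x=0$. Since $\gamma$ meets the $y$-axis only at $P$ and $Q$, the integrand is positive almost everywhere, a contradiction. So every periodic orbit leaves the strip $|x|<r_0$.

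\emph{Step 2 (a fixed positively invariant neighborhood).} I will construct a neighborhood $N$ of the origin, independent of $\eps$, with $N\subset\{|x|<r_0\}$ and $N$ positively invariant for all small $\eps$. Given this, any periodic orbit meeting $N$ stays in $N$ for all forward time. Being periodic, it is then entirely contained in $N$, which contradicts Step~1. So $N$ is the required neighborhood.

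The plan for constructing $N$ uses the variable $v=y-F(x)$ and~\eqref{eq:v-dot}, in the spirit of the strip argument of Lemma~\ref{lem:compact}. Fix $0<\rho<r_0$, and take $N$ to be the region bounded by three pieces:
\begin{itemize}
\item vertical sides $x=\pm\rho$, on which $\dot x=v$ points inward as long as $v$ has the correct sign there;
\item an upper arc, which is a trajectory segment of the layer flow (horizontal at height slightly above $F(\rho)$), closed off near $x=-\rho$ where $v>0$ and so $\dot x>0$ points inward;
\item a symmetric lower arc.
\end{itemize}
For the $O(\eps)$ drift $\dot y=-\eps x$ along the horizontal arcs, I would tilt each arc slightly, giving it slope $\mp\eps K$. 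The transversality condition then reduces to the inequality $\eps|x|<\eps K|y-F(x)|$. This holds because $|y-F(x)|$ is bounded below on these arcs, once $\rho$ is fixed and the heights are chosen away from the graph of $F$ on $[-\rho,\rho]$.

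A more robust alternative is to verify positive invariance directly. Since $F$ is increasing on $(-r_0,r_0)$, orbits starting in a small box around the origin are first carried by the fast flow to an $O(\eps)$-strip $|v|\leq\eps C$ around $\mathcal C_0$; this strip is invariant by the same sign check as in Lemma~\ref{lem:compact}. Inside the strip, the reduced flow~\eqref{eq:reduced} moves $x$ monotonically toward $0$, so the orbit never reaches $|x|=r_0$. The difficulty with this route is near $x=0$, where $F'$ vanishes (for $k\ge3$) and the strip width $\eps|x|/F'(x)$ degenerates. There one instead uses that $|x|$ is already small, so the orbit cannot reach $|x|=r_0$ anyway.

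\emph{Main obstacle.} I expect the hard part to be making Step~2 uniform in $\eps$. This requires checking the corners of $N$, where the vertical sides meet the tilted arcs, and the degenerate point $x=0$ when $k\ge3$, with all constants independent of $\eps$. Step~1 and the reduction to $a_k>0$ are routine.
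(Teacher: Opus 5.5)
Your overall plan matches the paper's: trap orbits near the origin in an $\eps$-uniform positively invariant set inside the strip where $xF(x)>0$, then conclude with $\oint\dot E\,dt=0$ for $\dot E=-\eps xF(x)$. Your Step~1 is exactly the paper's final contradiction, and the reduction to $a_k>0$ is fine.

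The gap is in Step~2, and it is more than a matter of checking corners: the region you describe is not positively invariant. On the wall $x=\rho$ one has $\dot x=y-F(\rho)$, which is positive whenever $y>F(\rho)$. With the upper arc placed above $F(\rho)$, and the lower arc below $F(-\rho)$, orbits leave through the top of the right wall and the bottom of the left wall.

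The underlying reason is structural. An admissible upper boundary passes above the origin, where it lies above $F(0)=0$, and it must meet the right wall at height at most $F(\rho)$. Hence it necessarily crosses the critical curve $y=F(x)$ at some $x\in(0,\rho]$. Your requirement that $|y-F(x)|$ stay bounded below along the arcs is therefore incompatible with vertical walls, and your sufficient condition $\eps|x|<\eps K|y-F(x)|$ cannot hold at the crossing. The missing observation is that such a crossing is harmless: there $\dot x=0$ and $\dot y=-\eps x<0$, so the flow still enters the region. Note also that on $x<0$ the upper arc must rise to the right, since there $\dot x>0$ and $\dot y>0$.

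Your alternative route through the $O(\eps)$-strip does not close the gap either. The degenerate neighbourhood of $x=0$, which you acknowledge, is exactly what it leaves unproved.

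The paper avoids all of this by using a level set of the same energy as the curved boundary. Choose $\delta>0$ with $xF(x)>0$ for $0<|x|\le\delta$, choose $0<h<\min\{F(\delta),-F(-\delta)\}$, and set $D_\eps=\{|x|\le\delta,\ \eps x^2+y^2\le h^2\}$.
\begin{itemize}
\item On the elliptical arcs, $\dot E=-\eps xF(x)\le0$.
\item On the vertical sides, $|y|\le h$ gives $\dot x=y-F(\delta)<0$ at $x=\delta$ and $\dot x=y-F(-\delta)>0$ at $x=-\delta$.
\end{itemize}
So $D_\eps$ is positively invariant without any separate corner analysis. Placing the top and bottom at heights $\pm h$ strictly inside $(F(-\delta),F(\delta))$ is exactly the correction your arcs need. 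For $\eps\le 3h^2/(4\delta^2)$, $D_\eps$ contains the fixed rectangle $\{|x|<\delta,\ |y|<h/2\}$, and your Step~1 then finishes the proof.

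Your tilted arcs can be repaired in the same spirit. Put the upper arc at a height in $(0,F(\rho))$, rising to the right on $x<0$ and flat on $x\ge0$, with the lower arc treated symmetrically. This is essentially a parabolic approximation of the paper's ellipse.
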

\begin{proof}
\rev{Reversing time and replacing $y$ by $-y$ (and hence $F$ by $-F$) if necessary,} choose $\delta>0$ so that $xF(x)>0$ for $0<|x|\leq\delta$. Fix $h$ with $0<h<\min\{F(\delta),-F(-\delta)\}$. The set $D_\eps=\{|x|\leq\delta,\ \eps x^2+y^2\leq h^2\}$ is positively invariant. The energy decreases on its elliptical boundary, and the vector field points inward on its vertical edges. For small $\eps$, these sets contain a fixed open rectangle about the origin. A periodic orbit meeting that rectangle would remain in $D_\eps$ for its whole period, contradicting the integral of~\eqref{eq:energy}. The reversed-time case is identical.
\end{proof}

If $a_1=0$, introduce $w=x^2/2$ and
\[
F_\pm(w)=F(\pm\sqrt{2w}).
\]
Both functions are $C^1$ at $w=0$. The two half-orbits satisfy the exact scalar equations
\begin{equation}\label{eq:scalar-halves}
\eps\frac{dw_\pm}{dy}=F_\pm(w_\pm)-y,\qquad w_\pm(q)=w_\pm(p)=0.
\end{equation}
In the interiors of the half-orbits, $w_\pm>0$. Moreover,
\begin{equation}\label{eq:odd-difference}
F(r)-F(-r)=2r^3(a_3+a_5r^2)
\end{equation}
\rev{has a constant nonzero sign} for all sufficiently small $r>0$.

\begin{lemma}\label{lem:no-collapse}
There is $\delta>0$ such that no periodic orbit of~\eqref{eq:system} is contained in $|x|<\delta$, for all sufficiently small positive $\eps$. In particular, periodic orbits cannot converge to the origin as $\eps\to0^+$.
\end{lemma}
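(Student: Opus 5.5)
We split according to the lowest-order nonzero coefficient of $F$ at the origin. If $a_1\neq0$, or more generally if the first nonzero term of $F$ has odd degree, Lemma~\ref{lem:origin-odd} already gives a fixed neighbourhood of the origin that misses every periodic orbit for small $\eps$. Any such neighbourhood contains a strip $|x|<\delta$ intersected with a bounded set. Since every periodic orbit surrounds the origin and crosses both half $y$-axes, an orbit lying in $|x|<\delta$ with $\delta$ small must still reach the region near the origin only if it is small in $y$ as well. To avoid this extra step, I would treat all cases with $a_1=0$ uniformly by the scalar argument below; it does not use the parity of the first nonzero term. The case $a_1\neq0$ is then covered by Lemma~\ref{lem:origin-odd}, possibly combined with the compactness of Lemma~\ref{lem:compact}.

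So assume $a_1=0$ and use the exact scalar equations~\eqref{eq:scalar-halves}. The key observation is that both half-orbits, viewed as functions of $y\in[q,p]$, satisfy the same type of equation $\eps w'=F_\pm(w)-y$ with the same boundary data $w(q)=w(p)=0$; only the nonlinearity differs. By~\eqref{eq:odd-difference}, $F_+(w)-F_-(w)=F(r)-F(-r)=2r^3(a_3+a_5r^2)$ with $r=\sqrt{2w}$. Note that the even-degree terms cancel here, and $a_5>0$. Hence this difference has a constant strict sign for $0<w\leq\delta^2/2$. Without loss of generality take it positive; otherwise exchange the roles of $w_+$ and $w_-$.

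Now suppose a periodic orbit lies in $|x|<\delta$, so that $0<w_\pm(y)<\delta^2/2$ on $(q,p)$. Put $d=w_+-w_-$ and write
\[
\eps d'=\bigl[F_+(w_+)-F_+(w_-)\bigr]+\bigl[F_+(w_-)-F_-(w_-)\bigr]=A(y)\,d+g(y),
\]
where $A(y)=\int_0^1F_+'(w_-+s d)\,ds$ and $g(y)=F_+(w_-(y))-F_-(w_-(y))$. The coefficient $A$ is continuous and bounded because $F_+$ is $C^1$ on $[0,\delta^2/2]$; this is exactly where $a_1=0$ is needed. The forcing term satisfies $g>0$ on $(q,p)$ since $w_->0$ there. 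Variation of constants with $d(q)=0$ then gives
\[
d(p)=\eps^{-1}\int_q^p \exp\Bigl(\eps^{-1}\!\int_s^pA\Bigr)g(s)\,ds>0,
\]
which contradicts $d(p)=w_+(p)-w_-(p)=0$. Hence no periodic orbit lies in $|x|<\delta$, for any $\eps>0$ and not only small ones.

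The main point to check carefully is regularity at the endpoints. The functions $w_\pm$ are only defined via $x_\pm(y)$, so I would verify that they are $C^1$ on $[q,p]$ and solve~\eqref{eq:scalar-halves} there. This follows from $dw/dy=(F(x)-y)/\eps$, whose right-hand side extends continuously to the endpoints. I would also confirm that $F_\pm$ is $C^1$ at $w=0$, which holds because $F'(x)/x$ is bounded when $a_1=0$. The statement about convergence to the origin follows at once: any orbit converging to the origin would eventually lie in $|x|<\delta$.
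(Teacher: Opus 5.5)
Your argument for the case $a_1=0$ is correct and follows the paper: both compare the two scalar equations \eqref{eq:scalar-halves} using the strict sign of \eqref{eq:odd-difference}. Your version is tighter than the paper's one-line comparison. Writing $\eps d'=A(y)d+g(y)$ with bounded $A$ and $g>0$, and reading off $d(p)>0$ by variation of constants, needs only the forward direction against the terminal condition $d(p)=0$. It also handles the degenerate start at $y=q$ rigorously, where $w_+$ and $w_-$ have equal values and equal derivatives ($\eps w_\pm'(q)=-q$). Your checks of the $C^1$ regularity of $w_\pm$ up to the endpoints, and of $F_\pm$ at $w=0$, are exactly what is needed.

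The case $a_1\neq0$ is not actually proved as written. You raise the worry that an orbit in a thin strip $|x|<\delta$ might be tall and miss the fixed rectangle of Lemma~\ref{lem:origin-odd}. You then defer to that lemma ``possibly combined with'' Lemma~\ref{lem:compact}. Compactness only bounds orbits from outside, so it cannot close this step.

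The worry is unfounded, and the missing step is one line. Since $q<0<p$, every periodic orbit meets the $x$-axis. If the orbit lies in $|x|<\delta$, that point lies in $(-\delta,\delta)\times\{0\}$. This segment is inside the excluded rectangle once $\delta$ is below the rectangle's half-width.

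Alternatively, argue as the paper does, directly and for every $\eps>0$. For $a_1\neq0$, choose $\delta$ so that $xF(x)$ has a strict sign on $0<|x|<\delta$. Integrating $\dot E=-\eps xF(x)$ over one period of an orbit contained in that strip gives a nonzero net change of $E$, which contradicts periodicity.
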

\begin{proof}
When $a_1\neq0$, choose $\delta$ so that $xF(x)$ \rev{has a constant nonzero sign} on $0<|x|<\delta$. Integrating~\eqref{eq:energy} over an orbit contained in this strip gives an immediate contradiction. When $a_1=0$, choose $\delta$ so that~\eqref{eq:odd-difference} is strictly positive or strictly negative on $(0,\delta]$. In the positive case, scalar comparison in~\eqref{eq:scalar-halves}, starting at $y=q$, gives $w_+>w_-$ on $(q,p)$. Comparison backward from $p$ gives $w_+<w_-$, a contradiction. The negative case is the same with the inequalities reversed.
\end{proof}

\begin{remark}\label{rem:height-collapse}
If a sequence of periodic orbits has $p_n,q_n\to0$, then its entire horizontal amplitude tends to zero. Otherwise~\eqref{eq:height-between} would force the fixed nonzero polynomial $F$ to vanish on a nondegenerate interval. Thus Lemma~\ref{lem:no-collapse} also excludes collapse of the vertical height.
\end{remark}

\subsection{Singular limits and stability transitions}\label{sec:changes}
\rev{The critical curve is $\mathcal{C}_0=\{(x,y):y=F(x)\}$. The polynomial remains fixed throughout the compactness and limiting arguments.} Its normal eigenvalue is $-F'(x)$, and the reduced flow is given by~\eqref{eq:reduced}. A regular critical arc is attracting if $F'>0$ and repelling if $F'<0$.

\begin{definition}\label{def:common-canard}
A limiting closed slow--fast path is called \emph{common} if all its positive-length regular critical arcs have the same normal stability. It is called a \emph{\rev{canard}} if it contains regular critical arcs of both types. Degenerate contacts and repeated vertices are allowed in this terminology; the limiting path need not be a Jordan curve.
\end{definition}

\begin{lemma}\label{lem:singular-limit}
Every sequence of periodic orbits with $\eps_n\to0^+$ has a subsequence converging in the Hausdorff metric to a nontrivial finite closed concatenation of directed critical arcs and horizontal layer orbits. The limit contains a positive-length regular critical arc in each open half-plane. Its half-orbit graphs converge uniformly on compact subarcs of each regular critical graph.
\end{lemma}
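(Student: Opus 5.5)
Compactness comes first. By Lemma~\ref{lem:compact}, all periodic orbits with small $\eps_n$ lie in the fixed rectangle $K$. Blaschke selection then gives a subsequence converging in the Hausdorff metric to a compact connected set $\Gamma\subset K$. Along this subsequence I also let $p_n\to p$, $q_n\to q$, $a_n\to a$ and $b_n\to b$. Lemma~\ref{lem:no-collapse} and Remark~\ref{rem:height-collapse} give nontriviality: $a>0>b$, since a strip $|x|<\delta$ cannot contain any orbit, and also $(p,q)\neq(0,0)$. By~\eqref{eq:height-between} in the limit, $q\le F(c)\le p$ for $b\le c\le a$.

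Next I identify the structure of $\Gamma$ from the half-orbit graphs, working on the right half; the left half follows by the symmetry $(x,y)\mapsto(-x,-y)$. Write the right half-orbit as the graph $x=x_+(y)$, or as $v=y-F(x)$ along it. Since $\dot y=-\eps x$ is slow, away from $\mathcal{C}_0$ the fast equation $\dot x=y-F(x)$ dominates. Any point of $\Gamma$ with $y\neq F(x)$ therefore lies on a horizontal layer orbit, traversed in the direction of $\operatorname{sign}(y-F(x))$. Near a regular arc of $\mathcal{C}_0$ I would use~\eqref{eq:v-dot}, $\dot v=-\eps x-F'(x)v$.

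On an attracting arc with $F'>0$, and with $x$ bounded away from $0$ on the right half, the strip argument from the proof of Lemma~\ref{lem:compact} shows that once the orbit is close to the arc it stays within $O(\eps)$ of it. On a repelling arc the same holds in reversed time. This gives convergence $x_+(y)\to$ the appropriate local inverse branch of $F$, uniformly on compact subarcs of regular critical graphs. The point needing care is uniformity in $n$: I would use the monotone structure ($x_+$ unimodal, with total variation $2a_n$) to show that $\Gamma\cap\{x>0\}$ consists of finitely many pieces. There are only finitely many branches of $F^{-1}$ between the critical values of the quintic, and horizontal jumps can only occur at levels where the graph leaves a branch. Those levels are folds ($F'=0$) or the endpoints $p$ and $q$, and there are at most four folds, so the concatenation is finite.

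Then I show that there is a positive-length regular critical arc in $x>0$. Suppose not. Then $\Gamma\cap\{x>0\}$ would consist of horizontal segments joined through $\mathcal{C}_0$ only at isolated points. But $y$ decreases from $p$ to $q$ along the right half-orbit, at rate $\eps x$. For this total change in $y$ to occur, the orbit must spend time of order $1/\eps$ somewhere with $x$ bounded away from $0$, unless $p-q\to0$. In that time fast motion would have carried it to within $O(\eps)$ of $\mathcal{C}_0$, and its $y$ would then sweep an interval. This produces a critical arc of positive length. If instead $p=q$, the relation $q\le F\le p$ on $[b,a]$ forces $F$ to be constant there, which is impossible. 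The main obstacle is making this time-scale argument precise near the origin ($x\to0$, where $\dot y$ degenerates) and near folds. For those I would rely on the exact scalar equation $\eps\, dx_+/dy$ form, i.e.\ the analogue of~\eqref{eq:scalar-halves}, and use comparison rather than attempt a fold blow-up at this stage.
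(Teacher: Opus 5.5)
Your outline has the right frame, and two parts of it are fine: Blaschke selection, and the observation that off $\mathcal{C}_0$ only one horizontal direction is possible (which also rules out out-and-back excursions at a turning height). The problem is that the decisive step is only a heuristic. Everything rests on showing that each limiting half-graph lies on $\mathcal{C}_0$ at almost every height of $(q,p)$. You defer the origin and the folds to an unspecified comparison argument. Your assertion that fast motion brings the orbit ``within $O(\eps)$ of $\mathcal{C}_0$'' is false near folds and along repelling arcs. No local analysis is needed. Let $|x|\le X$ on $K$. Along a half-orbit $y$ is strictly monotone with $|\dot y|\le\eps_nX$, and where $|y-F(x)|\ge\nu$ one has $|\dot x|\ge\nu$. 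So the time spent there is at most the total variation of $x$ divided by $\nu$, hence at most $2X/\nu$, and the corresponding heights have measure at most $2\eps_nX^2/\nu$; this is \eqref{eq:height-measure}. Apply Helly's theorem to the two monotone pieces of each half-graph and pass to a further subsequence. This gives $F(x_\pm(y))=y$ for almost every $y\in(q,p)$. Since $p>q$, and a root of $F(x)=y\neq0$ cannot be $x=0$, you get positive-length regular arcs in both open half-planes at once, with no special treatment of the origin or the folds. This is also the actual source of $a>0>b$: Lemma~\ref{lem:no-collapse} by itself gives only $\max(a,-b)\ge\delta$.

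Your finiteness argument rests on a false claim. Strip invariance in reversed time shows that the orbit cannot \emph{join} a repelling arc at a regular point, but it does not prevent the orbit from \emph{leaving} one there. In the right half-plane, a limit path may descend a repelling arc from a maximum, jump off at a regular point of non-critical height, and land on an attracting arc. The lemma must cover such canard paths, because Lemma~\ref{lem:no-origin-transition} and Propositions~\ref{prop:no-equal-nonzero} and~\ref{prop:no-equal-origin} invoke it before canards are excluded. So your list of jump levels is incomplete, and the count based on four folds does not go through as written. Use monotonicity instead. On a monotone piece of a limiting half-graph, $F(x(y))=y$ almost everywhere admits only inverse branches of matching monotonicity. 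Moreover, the heights at which $x(y)$ lies in a fixed monotonicity interval of $F$ form an interval, so each branch is visited once, in horizontal order. Finally, your strip argument for uniform convergence can be repaired, but it is unnecessary: monotone functions converging almost everywhere to a continuous inverse branch converge uniformly on its compact subintervals.
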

\begin{proof}
Lemma~\ref{lem:compact} provides a common bound $|x|\leq X$. Extract a subsequence for which the two axis heights and the two turning heights converge. \rev{If $p=q$, then $p\geq0\geq q$ implies $p=q=0$, which is excluded by Remark~\ref{rem:height-collapse} and Lemma~\ref{lem:no-collapse}. Thus $p>q$.} Helly's theorem, applied to the two monotone pieces of each half-graph, gives almost-everywhere limits $x_+(y)$ and $x_-(y)$ on $(q,p)$.

For any $\nu>0$, the time spent where $|y-F(x)|\geq\nu$ is bounded by the total variation of $x$ divided by $\nu$. Since $|\dot y|\leq\eps_nX$, each half-orbit satisfies
\begin{equation}\label{eq:height-measure}
\bigl|\{y:|F(x_{\pm,n}(y))-y|\geq\nu\}\bigr|\leq\frac{2\eps_nX^2}{\nu}.
\end{equation}
After taking a further subsequence,
\begin{equation}\label{eq:inverse-branches}
F(x_+(y))=F(x_-(y))=y\qquad\text{for almost every }y\in(q,p).
\end{equation}
For $y\neq0$, the two roots lie in their respective open half-planes.

Split the real axis at $0$ and at the real zeros of $F'$. On a nondecreasing piece of a limit half-graph, relation~\eqref{eq:inverse-branches} can use only increasing inverse branches of $F$: a decreasing inverse branch cannot agree with a nondecreasing function at two different heights. The analogous statement holds on a nonincreasing piece. \rev{On each monotone piece of a limiting half-graph, the set of heights for which $x_\pm(y)$ belongs to a fixed monotonicity interval of $F$ is itself an interval. The branches are visited in their horizontal order.} The limit therefore consists of finitely many inverse-branch pieces and finitely many jumps. Complete each jump by its horizontal segment, and join the endpoint limits to the axis at heights $p$ and $q$.

No additional horizontal excursion is lost at a turning height. If a limiting turning point extended beyond both adjacent graph limits by a nonzero distance, every vertical line in some fixed interval would be crossed by the outgoing and returning parts at heights tending to the same number $h$. Their velocities have opposite signs, so $F(x)=h$ throughout that interval, which is impossible. The same argument excludes extra excursions at the endpoints. Monotone convergence to a continuous inverse branch is uniform on its compact subintervals, by a finite-grid argument. This proves the asserted Hausdorff convergence.

A completed horizontal segment directed to the right satisfies $h-F(x)\geq0$; one directed to the left satisfies $h-F(x)\leq0$. Splitting it at the finitely many roots of $F(x)=h$ gives directed layer orbits. Critical arcs have the direction prescribed by~\eqref{eq:reduced}. Finally $p>q$, by Remark~\ref{rem:height-collapse} and Lemma~\ref{lem:no-collapse}. Relation~\eqref{eq:inverse-branches} then supplies positive-length regular critical arcs in both half-planes.
\end{proof}

\begin{lemma}\label{lem:canard-classification}
If a limit in Lemma~\ref{lem:singular-limit} is a \rev{canard}, it has an attracting-to-repelling transition of one of the following forms:
\begin{enumerate}
\renewcommand{\labelenumi}{(\roman{enumi})}
\item a continuous critical connection through an extremum at the origin;
\item a horizontal connection between two distinct equal-height extrema of the same type, lying on opposite sides of the origin;
\item a horizontal connection between two distinct equal-height extrema of the same type, one of which is the origin.
\end{enumerate}
In \textup{(ii)} and \textup{(iii)}, both extrema are quadratic.
\end{lemma}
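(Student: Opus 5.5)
Since the limit is a canard, it contains an attracting regular arc ($F'>0$) and a repelling one ($F'<0$). I would traverse the closed limiting path of Lemma~\ref{lem:singular-limit} in its direction of motion and locate a point where the stability label of consecutive regular critical arcs switches from attracting to repelling. Such a switch must occur, because the path is closed and carries both labels. The proof then classifies what can sit between the last attracting arc and the first repelling arc. Only two things can: the path either stays on $\mathcal C_0$, or it leaves along a horizontal layer orbit. The case in which several fast segments and degenerate critical pieces lie in between will be reduced to these two by picking the transition minimally, that is, consecutive in the finite concatenation.

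\emph{Continuous critical connection.} Suppose the attracting and repelling arcs meet on $\mathcal C_0$ at a point $x_0$. Then $F'$ changes sign at $x_0$, so $x_0$ is an extremum of $F$. On the attracting side the reduced flow~\eqref{eq:reduced} gives $dx/d\tau=-x/F'(x)$, which points toward $x_0$ only if $x_0$ is a fold with $x_0\neq0$ approached from the attracting side. At a fold with $x_0\neq0$, however, the reduced flow on the repelling side also points into $x_0$. So a slow arc cannot leave a fold $x_0\neq0$ along the repelling branch. In the layer equation the fold is a jump point, and the orbit must continue horizontally. Hence a continuous attracting-to-repelling passage is possible only at $x_0=0$, where $\dot x$ has the same sign on both sides and the slow flow passes through. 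This is case (i).

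\emph{Horizontal connection.} Otherwise the attracting arc ends at a point where the trajectory leaves $\mathcal C_0$. This can only be an extremum $x_1$ of $F$ (a fold), or the origin if it is an extremum. A horizontal layer orbit at height $h=F(x_1)$ then leads to the start of a repelling arc at $x_2$. The direction of the fast flow forces $h-F(x)$ to keep a fixed sign on $(x_1,x_2)$, so the segment does not cross $\mathcal C_0$ in between. For the repelling arc to start at $x_2$, the point must be a point the reduced flow leaves along a repelling branch while the fast fiber arrives there tangentially. In other words, $x_2$ must also be an extremum with $F(x_2)=h$ of the same type: both maxima if $h-F\le0$ near them, both minima otherwise. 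The reason is that a regular repelling point is a source of the layer equation and cannot be reached by a fast fiber from outside.

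If $x_1,x_2$ lie on the same side of $0$ and neither is the origin, the reduced flow near $x_2$ points toward $x_2$ on its repelling branch, so no slow arc can depart. This forces opposite sides, giving case (ii), or forces one of them to be the origin, giving case (iii). Here I use the fact that the direction $-x/F'$ of the reduced flow changes sign only across $0$.

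The step I expect to be the main obstacle is the final claim that both extrema are quadratic. My first attempt would be a degree count. Suppose $x_1$ has odd multiplicity $2k+1\ge3$ as a critical point, i.e.\ $F'$ vanishes to even order $\ge 2$. Then $F'$ does not change sign, and $x_1$ would not separate attracting from repelling branches, contradicting the transition. So the multiplicity of $F'$ at each extremum is odd. Next, $F-h$ has double or higher roots at both $x_1$ and $x_2$, and if the origin is involved it also needs $F'(0)=0$. If either $F'$ vanishes to order $\ge3$ at an extremum, the count is at least $4+2=6$ roots of $F-h$ (with multiplicity), or $F'$ has too many zeros. This exceeds the degree, since $\deg F=5$ and $F'$ has degree $4$. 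The case of the origin (iii) needs extra care, because $F-h$ may then have exactly the roots $0$ with multiplicity $\ge2$ and $x_2$ with multiplicity $\ge2$. The bookkeeping will be done there by combining multiplicity $4$ from $0$ with multiplicity $2$ from $x_2$, which already totals $6>5$.
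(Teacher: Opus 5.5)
Your route is the paper's. You take consecutive positive-length regular arcs, use the layer field to force extremal endpoints on a horizontal block, get a common extremum type from the sign of $h-F$, exclude transitions inside one half-plane, and finish with the count $4+2>5$. That count is exactly the paper's, and case (iii) needs no extra care.

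Two steps are wrong as written. The first is a labelling slip: $h-F\le0$ near the endpoints means $F\ge h$, so both endpoints are minima. It is $h-F\ge0$, with rightward motion, that gives maxima.

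The second is more substantive. Your same-side exclusion claims that the reduced flow on the repelling branch always points into $x_2$. This holds for equal minima in $x>0$ and equal maxima in $x<0$, but fails in the other two configurations. Take equal maxima $0<x_1<x_2$. The block moves right, and the repelling branch at $x_2$ is $x>x_2$. There $-x/F'(x)>0$ carries the slow flow away from $x_2$, so a repelling arc can depart. The obstruction sits at $x_1$ instead. Its attracting branch is $x<x_1$, where $-x/F'(x)<0$ also moves away from $x_1$, so no attracting arc can arrive. Equal minima in $x<0$ behave the same way.

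The repair follows from the principle you cite at the end: within a fixed open half-plane, attracting slow motion is directed toward the $y$-axis and repelling motion away from it. So if $x_1,x_2$ lie in the same open half-plane, the attracting branch at $x_1$ and the repelling branch at $x_2$ both lie on the side away from the axis. That makes one endpoint a minimum and the other a maximum, contradicting equality of type.

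The paper gets this, together with your continuous nonzero-fold case, from one observation. By unimodality of the half-graphs, repelling slow pieces precede attracting ones in forward time within each half-plane. Hence no attracting-to-repelling transition can occur inside a half-plane at all.

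Two smaller omissions remain. First, directedness gives only a weak sign of $h-F$, so a block could a priori touch $\mathcal C_0$ at an interior point: a third equal-height extremum, or the origin at height $0$. Your "does not cross" does not address touching. The same degree count excludes it, and the paper says so explicitly. Second, stationary inflections must be grouped with regular points. There the layer field points toward the curve on both sides (attracting) or away on both sides (repelling), so a block cannot begin or end there.
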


The three configurations are illustrated in Figure~\ref{fig:canard-transitions}.
\begin{figure}[htbp]
\centering
\begin{minipage}[b]{0.31\textwidth}
\centering
\includegraphics[width=\linewidth,trim=180bp 230bp 150bp 400bp,clip]{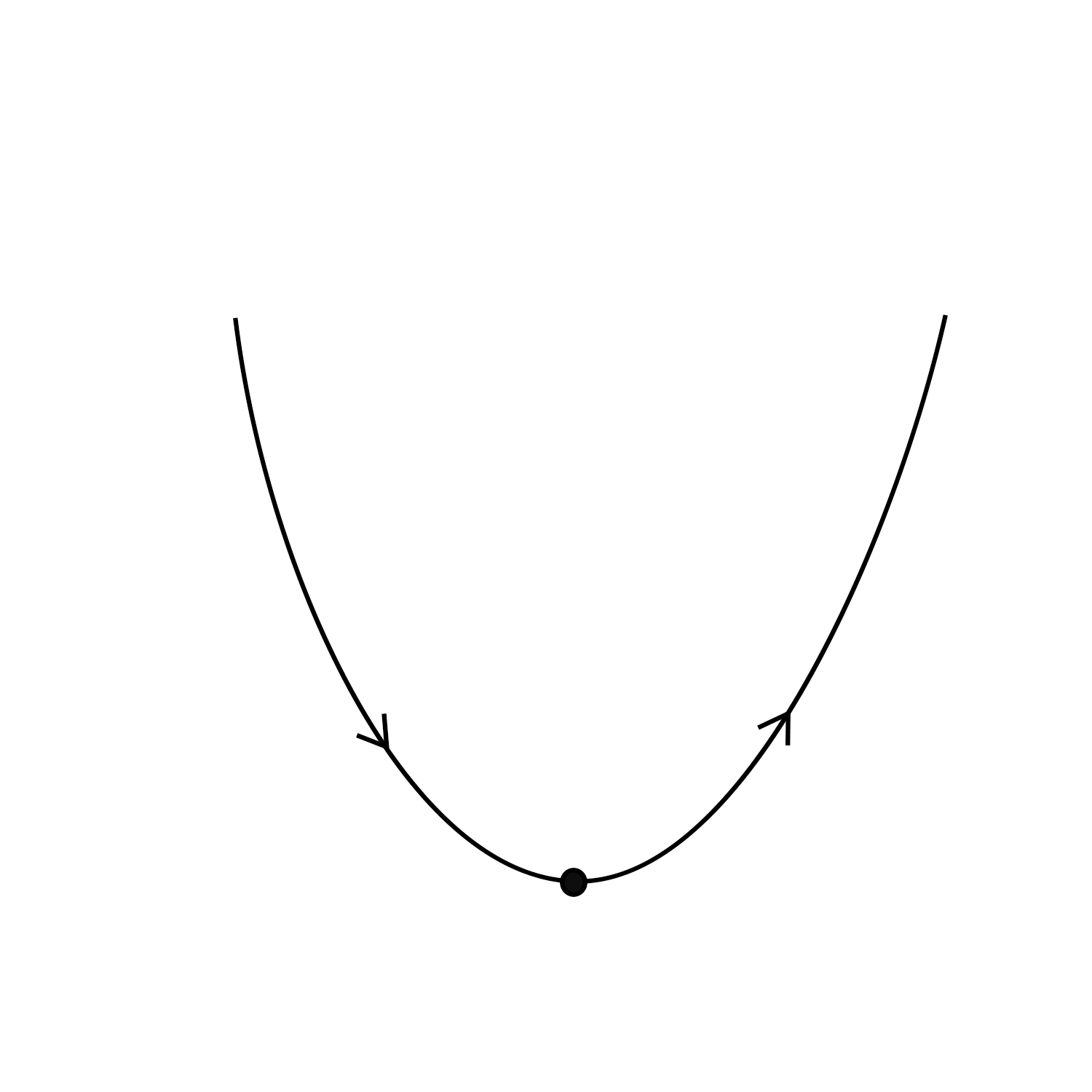}
\par\smallskip
\textup{(i)}
\end{minipage}\hfill
\begin{minipage}[b]{0.31\textwidth}
\centering
\includegraphics[width=\linewidth,trim=180bp 440bp 150bp 400bp,clip]{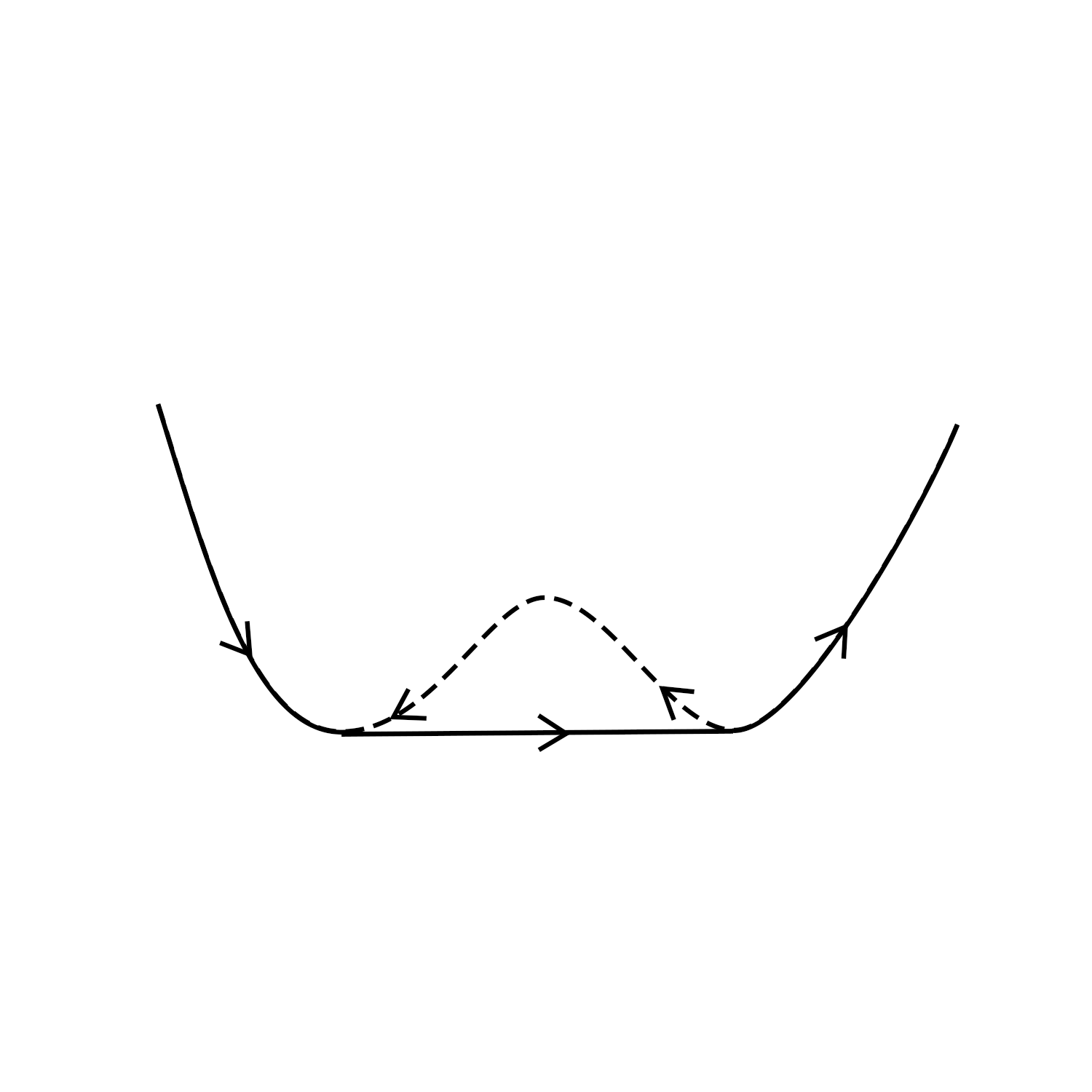}
\par\smallskip
\textup{(ii)}
\end{minipage}\hfill
\begin{minipage}[b]{0.31\textwidth}
\centering
\includegraphics[width=\linewidth,trim=180bp 440bp 150bp 400bp,clip]{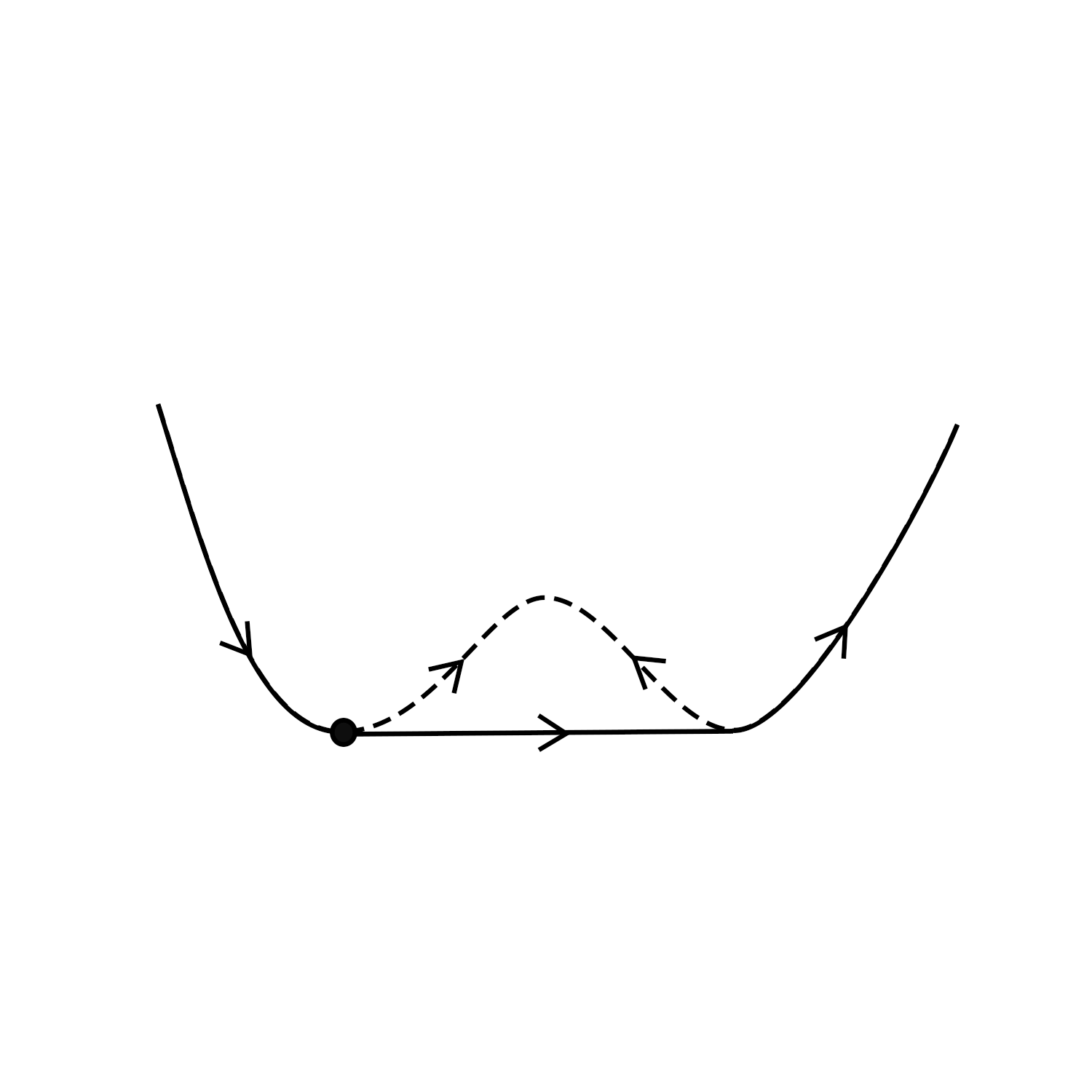}
\par\smallskip
\textup{(iii)}
\end{minipage}
\caption{The three possible attracting-to-repelling transitions in Lemma~\ref{lem:canard-classification}.}
\label{fig:canard-transitions}
\end{figure}

\begin{proof}
A \rev{canard} closed path necessarily changes from attracting to repelling somewhere. Here, a transition means a passage between consecutive positive-length regular critical arcs; any intervening horizontal segments are combined into one maximal horizontal block. This convention includes passages through isolated critical points without a slow arc of positive length there. In either open half-plane, unimodality of the half-graph places repelling slow pieces before attracting ones in forward time. It excludes an attracting-to-repelling change within that half-plane.

At an ordinary attracting root of the layer equation, the layer vector field points toward the root on both sides, so no outgoing horizontal segment is possible. At an ordinary repelling root it points away on both sides, so no incoming horizontal segment is possible. The same statements hold at stationary inflections: their contact with a horizontal line has odd order, and normal stability is unchanged. Thus a horizontal attracting-to-repelling connection must have extrema at both endpoints. A nonzero extremum cannot be crossed continuously by the reduced motion, since $y$ is strictly monotone in its half-plane and both adjacent critical arcs lie on the same side of its critical height.

At a nonzero upper crossing of the axis, the horizontal motion is to the right, with $F(x)\leq h$ along the connection; both extremal endpoints are therefore maxima. At a lower crossing the motion is to the left, with $F(x)\geq h$; both endpoints are minima. Different extremum types would force a change in the sign of the layer vector field. A same-half-plane connection would contradict unimodality.

At height zero, a maximal horizontal block cannot have two nonzero extremal endpoints on opposite sides and pass through the origin as an interior point. Indeed, directedness would make $F$ have one sign on the entire intervening interval. Since $F(0)=0$, the origin would be a root of even multiplicity, in addition to the two even-multiplicity endpoint roots. This would require degree at least six. Thus a zero-height transition has the origin as an endpoint, or has zero horizontal length and is a continuous critical connection through the origin. In the former case a positive-length slow arc is incident to the origin by the maximal-block convention. In the latter case a change of normal stability requires an extremum at the origin. These are precisely the listed alternatives.

For two equal-height extrema, the polynomial $F-h$ has a root of even multiplicity at each endpoint. A degenerate extremum would have multiplicity at least four; the other endpoint has multiplicity at least two. Since $\deg F=5$, both multiplicities must be two. The same degree argument excludes a chain through three equal-height extrema.
\end{proof}

\begin{lemma}\label{lem:no-origin-transition}
Alternative \textup{(i)} of Lemma~\ref{lem:canard-classification} cannot occur as a limit of periodic orbits for a fixed quintic $F$.
\end{lemma}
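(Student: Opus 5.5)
The plan is to turn a continuous critical passage through an extremum at the origin into an exponential amplification in the scalar half-orbit equations~\eqref{eq:scalar-halves}. That amplification contradicts the compactness of Lemma~\ref{lem:compact}.

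\textbf{Setup.} In alternative (i), $F'(0)=0$ and the origin is an extremum, so $a_1=0$ and the first nonzero coefficient has even degree ($a_2\neq0$, or $a_2=a_3=0$ and $a_4\neq0$). By the symmetry $(x,y,t)\mapsto(x,-y,-t)$ with $F\mapsto -F$, which is already used in Lemma~\ref{lem:origin-odd}, I will assume the origin is a minimum. Then $F\geq0$ near $0$. The reduced flow~\eqref{eq:reduced} gives $dx/d\tau=-x/F'(x)$, which is negative and bounded away from zero near the origin when $a_2>0$. So the limiting path crosses the origin from right to left along $\mathcal{C}_0$.

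\textbf{Step 1: locate the lower crossing.} A leftward crossing of the $y$-axis is the lower crossing $Q=(0,q_n)$. Since the Hausdorff limit passes through $(0,0)$ along the critical curve, I expect $q_n\to0^-$. Lemma~\ref{lem:singular-limit} then gives uniform convergence $x_{\pm,n}(y)\to \pm F_\pm^{-1}$-branches on compact subarcs. In the $w=x^2/2$ variables this means: there are $y_0>0$ and $\eta>0$ such that, for all large $n$, both $w_{\pm,n}(y)$ stay in a small neighbourhood $[0,\rho]$ of $0$ for $y\in[q_n,y_0]$. Moreover $w_{-,n}(y)\geq\eta$ for $y\in[y_0/2,y_0]$.

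\textbf{Step 2: linearize the difference.} Put $d=w_+-w_-$. From~\eqref{eq:scalar-halves},
\[
\eps\,d'=g(y)\,d+s(w_-),\qquad g=\frac{F_+(w_+)-F_+(w_-)}{w_+-w_-},\qquad s(w)=F_+(w)-F_-(w),
\]
with $d(q_n)=0$. By~\eqref{eq:odd-difference}, $s(w)=2r^3(a_3+a_5r^2)$ with $r=\sqrt{2w}$, so $s$ has a fixed nonzero sign on $(0,\rho]$; in particular $|s|\geq c\,\eta^{5/2}$ where $w_-\geq\eta$. Since $dF_+/dw=F'(x)/x$, which tends to $2a_2>0$ (or behaves like $4a_4 w$ in the quartic case), the coefficient $g$ is positive on $[0,\rho]$. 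Scalar comparison forward from $y=q_n$ then shows that $d$ has the sign of $s$ throughout $[q_n,y_0]$. Variation of constants gives
\[
|d(y_0)|\;\geq\;\frac1{\eps_n}\int_{y_0/2}^{y_0}|s(w_-)|\,e^{\frac1{\eps_n}\int_t^{y_0}g}\,dt .
\]
On $[y_0/2,y_0]$ we have $g\geq g_0>0$, because $w_\pm\geq\eta/2$ there by Step 1. The lower bound is therefore of order $e^{c/\eps_n}$, which contradicts the uniform bound on $w$ coming from Lemma~\ref{lem:compact}, or already the smallness of $w_\pm\leq\rho$.

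\textbf{Main obstacle.} The delicate step is Step 1. I must justify that both half-graphs stay close to their critical branches uniformly on a $y$-interval starting exactly at $q_n$ and reaching a fixed positive height, so that $g$ is controlled there. Lemma~\ref{lem:singular-limit} gives uniform convergence only on compact subarcs away from the origin, where the inverse branches are singular in $x$ but smooth in $w$. The limiting path passes continuously through the origin, so no horizontal excursion is lost near $y=0$ (the no-extra-excursion argument of Lemma~\ref{lem:singular-limit}). This lets me first work in $w$, where $F_\pm$ are $C^1$ and monotone near $0$, to get $w_{\pm,n}\leq\rho$ on $[q_n,y_0]$. It then suffices to use positivity of $g$ on all of $[q_n,y_0]$ and the quantitative lower bound only on $[y_0/2,y_0]$. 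The quartic-extremum subcase needs $g\geq c\min(w_+,w_-)$ in place of a constant, but the same estimate goes through on $[y_0/2,y_0]$.
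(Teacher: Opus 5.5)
Your argument is correct in substance. It differs from the paper's only in how the final contradiction is extracted.

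Both proofs rest on the same ingredients:
\begin{itemize}
\item the scalar equations \eqref{eq:scalar-halves} on an interval starting at $q_n$;
\item the fixed sign of $s=F_+-F_-$ from \eqref{eq:odd-difference};
\item forward comparison from the common zero at $q_n$, which forces $d=w_{+,n}-w_{-,n}$ to carry the sign of $s$.
\end{itemize}
The paper stops at signs. If $F_+>F_-$, strict monotonicity gives $w_+^0(h)<w_-^0(h)$ for the limiting branches, so $d(h)>0$ contradicts convergence at the single level $h$.

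You use magnitude instead. Both half-graphs are repelled from $\mathcal{C}_0$ as $y$ increases, so variation of constants yields $|d(y_0)|\geq c'\bigl(e^{g_0y_0/(2\eps_n)}-1\bigr)$, which contradicts $|d|\leq\rho$.

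Your version makes the exponential mechanism explicit and never uses the ordering of the limiting branches. The cost is quantitative lower bounds on $g$ and $|s(w_-)|$ on $[y_0/2,y_0]$. In the quartic case this also requires $w_{+,n}$, not only $w_{-,n}$, to stay bounded away from zero there. The paper needs only signs.

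Two of your stated requirements are unnecessary:
\begin{itemize}
\item Positivity of $g$ on all of $[q_n,y_0]$: discarding $[q_n,y_0/2]$ uses only the sign of $s$ and the positivity of the exponential weight.
\item The expectation $q_n\to0^-$: it is never used.
\end{itemize}

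For Step 1, the no-extra-excursion argument is not the right tool. Use unimodality of $w_{\pm,n}$ in $y$, as the paper does.
\begin{enumerate}
\item Choose levels $y_0/2<y_0<2y_0$ inside the two regular arcs, with $y_0$ small enough that $w_\pm^0(y_0)<\rho$.
\item Uniform convergence at these levels and strict monotonicity of $w_\pm^0$ give $w_{\pm,n}(y_0/2)<w_{\pm,n}(y_0)<w_{\pm,n}(2y_0)$ for large $n$.
\item Each $w_{\pm,n}$ increases and then decreases, so each turning height exceeds $y_0$.
\end{enumerate}
Consequently $w_{\pm,n}(y)\leq w_{\pm,n}(y_0)\leq\rho$ on $[q_n,y_0]$. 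Also $w_{\pm,n}(y)\geq w_{\pm,n}(y_0/2)\geq\eta/2$ on $[y_0/2,y_0]$, with $\eta=\min w_\pm^0(y_0/2)$. This covers both half-graphs and is exactly what your Step 2 needs.
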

\begin{proof}
Only an even-order first nonzero term of $F$ needs consideration, by Lemma~\ref{lem:origin-odd}. \rev{Reversing time and replacing $y$ by $-y$ and $F$ by $-F$ if necessary, make this term positive.} The functions $F_\pm(w)$ are then positive and strictly increasing near $w=0$. Assume first that $F_+>F_-$ there.

Choose $h>0$ sufficiently small that the levels $h$ and $2h$ lie in the interiors of the two critical arcs of the proposed connection. Their inverse functions $w_\pm^0(y)$ are strictly increasing. Uniform convergence from Lemma~\ref{lem:singular-limit}, at $h$ and $2h$, implies $w_{\pm,n}(2h)>w_{\pm,n}(h)$ for large $n$. Because each $w_{\pm,n}$ is unimodal, its turning height exceeds $h$. Hence $w_{\pm,n}(y)\leq w_{\pm,n}(h)$ for $q_n\leq y\leq h$, so comparison in~\eqref{eq:scalar-halves} is valid throughout this local interval. Starting from the common value zero at $q_n$ gives $w_{+,n}(h)>w_{-,n}(h)$.

On the other hand, $F_+>F_-$ and strict monotonicity imply $w_+^0(h)<w_-^0(h)$. This contradicts convergence. If $F_+<F_-$, all inequalities reverse. A strict sign always exists by~\eqref{eq:odd-difference}.
\end{proof}

\subsection{Fold passage and the logarithmic obstruction}\label{sec:fold-passage}
It remains to exclude the two horizontal transitions in Lemma~\ref{lem:canard-classification}. For this purpose, we first derive the exit-height expansion at an ordinary fold.

\subsubsection{Fold exit asymptotics}
Consider the analytic local system
\begin{equation}\label{eq:local-fold-system}
\dot\xi=y-f(\xi),\qquad \dot y=-\eps g(\xi),
\end{equation}
where
\begin{equation}\label{eq:fold-coefficients}
f(\xi)=a\xi^2+b\xi^3+O(\xi^4),\qquad g(\xi)=c+d\xi+O(\xi^2),\qquad a,c>0.
\end{equation}
\begin{revision}
Choose a fixed compact attracting entrance subarc
\[
\mathcal{A}_{\mathrm{in}}=\{(\xi,f(\xi)): \eta_{\mathrm{in}}\leq\xi\leq2\eta_{\mathrm{in}}\},
\]
where $\eta_{\mathrm{in}}>0$ is sufficiently small and independent of $\varepsilon$, and $f'>0$ and $g>0$ on this subarc. Select a Fenichel slow manifold over $\mathcal{A}_{\mathrm{in}}$ and continue it forward through the fold. We measure its height on a fixed sufficiently small exit section $\xi=-\eta$. The analytic functions $f,g$ and these entrance and exit choices are fixed throughout the expansion.
\end{revision}

\begin{lemma}\label{lem:fold-exit}
The selected trajectory of~\eqref{eq:local-fold-system} intersects $\xi=-\eta$ at
\begin{equation}\label{eq:fold-exit}
y_{\mathrm{out}}=-\kappa\left(\frac{c^2}{a}\right)^{1/3}\eps^{2/3}-\frac B3\eps\log\eps+O(\eps),\qquad B=\frac da-\frac{cb}{a^2},
\end{equation}
where $-\kappa$ is the first negative zero of the Airy function $\Ai$. \rev{The constant in $O(\varepsilon)$ may depend on the fixed functions $f,g$ and the entrance and exit sections, but is independent of $\varepsilon$.} The expansion is unchanged if the selected trajectory is replaced by a trajectory that tracks a fixed positive-length attracting regular arc before reaching the fold.
\end{lemma}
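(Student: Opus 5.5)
We reduce to the Krupa--Szmolyan normal form for a generic fold and then perform the blow-up analysis, tracking the next-order terms that produce the logarithm.

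\textbf{Step 1: normal form.} Rescale $\xi=\alpha X$, $y=\beta Y$ and $t$ so that the leading part becomes
\[
X'=Y-X^2,\qquad Y'=-\eps.
\]
Matching $a\alpha^2=\beta$ and the time scale gives $\beta\propto (c/\sqrt a)^{2/3}$ up to the $\eps^{2/3}$ scaling. The quantity $(c^2/a)^{1/3}$ in the first term should come out directly from this rescaling. The cubic coefficient $b$ and the linear coefficient $d$ then appear as $O(X^3)$ and $O(\eps X)$ perturbations. A near-identity change $X\mapsto X+\gamma X^2$, together with a time reparametrization, moves them into a single term of the form $\eps B' X$ in $Y'$. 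Computing $B'$ should give the combination $B=d/a-cb/a^2$: the $b$-term contributes through $f'=2a\xi+3b\xi^2$, and dividing by $g$ contributes $d/c$.

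\textbf{Step 2: blow-up at the origin.} In the rescaling chart $X=\eps^{1/3}x_2$, $Y=\eps^{2/3}y_2$, the leading system is the Riccati equation $dx_2/dy_2=-(y_2-x_2^2)$. Its special solution, written in terms of $\Ai$, tracks the attracting branch and becomes asymptotic to $y_2=-\kappa$ as $x_2\to-\infty$. This gives the $-\kappa(c^2/a)^{1/3}\eps^{2/3}$ term.

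\textbf{Step 3: the logarithm.} The main obstacle is the next-order term. In the exit chart, where $X=-1$ is fixed and $r=\eps^{1/3}$, the perturbation $\eps B'X$ integrated along the fast fibre from $|X|\sim\eps^{1/3}$ to $|X|\sim\eta$ gives
\[
\int \frac{\eps B'X}{X^2}\,dX\sim \eps B'\log(\eta/\eps^{1/3})=-\tfrac{B}{3}\eps\log\eps+O(\eps).
\]
To make this rigorous, I would first write the exit-chart equations. I would then apply the regular-perturbation expansion of the chart-2 solution in powers of $r$, whose $r^3$ coefficient grows like $\log|x_2|$. Finally, I would match this growth with the exit chart, where the resonance between eigenvalues produces the logarithm. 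This follows Krupa--Szmolyan's treatment of the $\eps\log\eps$ term.

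\textbf{Step 4: independence of the trajectory.} The final sentence of the statement follows from Fenichel contraction. Any trajectory tracking a fixed positive-length attracting arc is exponentially close, $O(e^{-C/\eps})$, to the selected slow manifold before the fold. The fold passage expands such differences by at most $O(e^{-C'/\eps})$ factors smaller than the contraction, so the exit heights differ by $O(\eps)$ or less.
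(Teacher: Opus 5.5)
Your mechanism matches the paper's. You straighten the fold, take the $\varepsilon^{2/3}$ term from the Airy solution in the scaling chart, and get the logarithm by integrating the residue term $\delta B/X$ (with $\delta=\varepsilon^{1/3}$) along the tail $L\le|X|\le\eta/\delta$. Your bookkeeping is also right. With $\xi=X-\frac{b}{2a}X^2+\cdots$ and time reparametrized by $d\xi/dX$, one gets $\dot y=-\varepsilon(c+aBX+O(X^2))$, and the rescaling factors cancel, so the coefficient is exactly $-B/3$.

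The gap is Step~3, which is the real content of the lemma. There you give a heuristic integral and a plan, and the citation does not supply the missing argument. The Krupa--Szmolyan fold analysis gives the Airy term and only an order bound $O(\varepsilon\log\varepsilon)$ for the next correction. It does not compute that coefficient in terms of $b,d$, nor an $O(\varepsilon)$ remainder after it. The paper cites it only for the entrance estimate $Y_\delta(-L)=Y_0(-L)+O(\delta)$, which your Step~2 uses tacitly and which is legitimately available there. It states explicitly that the logarithmic coefficient comes from its own tail estimates.

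Nor can you simply evaluate a truncated chart-2 expansion at the exit, because the expansion of $y_2$ in powers of $r$ is not uniform on $|x_2|\le\eta/r$. The order-$r$ coefficient of $y_2$ (the $r^3$ term of $y$) grows like $\log|x_2|$. But the tail is essentially $\varepsilon\int g/f\,d\xi$ with $g/f=c/(a\xi^2)+B/\xi+K+O(\xi)$. The constant $K$ already makes the $r^2$ coefficient grow linearly in $|x_2|$, and for $k\ge2$ the $r^k$ term has size $r\,|\xi|^{k-1}$. So at $\xi=-\eta$ every order contributes at the level of the $O(\varepsilon)$ remainder you claim.

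What is needed is an estimate uniform in $\xi=\delta X$ up to $|\xi|=\eta$; the paper gets one without the exit chart. Let $Z=Y_\delta-Y_0$ be the deviation from the Airy graph in scaling coordinates. The paper shows $Z'=AZ+\delta B/X+R$ on $-\eta/\delta\le X\le-L$, with $|A|\le C|X|^{-4}$ and $|R|\le C(\delta|X|^{-3}+\delta^2)$, the last bound uniform for $\delta|X|\le\eta$. A bootstrap gives $Z=O(\delta(1+\log|X|))$, which keeps the denominators comparable to $X^2$. Variation of constants with kernel $\Phi(X,s)=1+O(|s|^{-3})$ then yields $Z(-\eta/\delta)=\delta B\log(\eta/(L\delta))+O(\delta)$. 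Together with $Y_0(-\eta/\delta)=Y_*+O(\delta)$, multiplying by $\delta^2$ gives the lemma.

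Your matching route through the resonant saddle of the exit chart could also work. But it needs the resonant normal form there, with its resonant coefficient computed in terms of $B'$; as written, neither the coefficient nor the remainder is proved.

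Step~4 reaches the right conclusion, but it rests on an unproved bound on expansion during the passage. You can replace it with the identity $\frac{d(y_1-y_2)}{d\xi}=\frac{\varepsilon g(\xi)}{(f(\xi)-y_1)(f(\xi)-y_2)}(y_1-y_2)$. Its coefficient is positive, so the height difference of two descending graphs cannot increase as $\xi$ decreases through the fold; this is how the paper argues.
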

\begin{proof}
\rev{First fix $\eta>0$ sufficiently small and then $L$ sufficiently large, independently of $\varepsilon$. Only after these choices do we let $\varepsilon\to0^+$, with $L\varepsilon^{1/3}<\eta$. All constants in the estimates below may depend on $f,g,\eta,L$ and the fixed entrance choice, but not on $\varepsilon$.}

Set $\delta=\eps^{1/3}$, $\xi=\delta X$, and $y=\delta^2Y$. The graph equation is
\begin{equation}\label{eq:scaling-equation}
Y'=\frac{c+d\delta X+O(\delta^2X^2)}{aX^2+b\delta X^3+O(\delta^2X^4)-Y},
\end{equation}
where the prime denotes differentiation with respect to $X$. At $\delta=0$, put $X=(c/a^2)^{1/3}\widehat X$ and $Y=(c^2/a)^{1/3}\widehat Y$. The inverse equation is $d\widehat X/d\widehat Y=\widehat X^2-\widehat Y$. The substitution $\widehat X=-u'/u$ gives $u''=\widehat Yu$, and the attracting entrance condition selects $u=\Ai(\widehat Y)$. Its graph $Y_0(X)$ satisfies
\begin{equation}\label{eq:airy-asymptotic}
Y_0(X)=Y_*+O(|X|^{-1}),\qquad Y_*=-\kappa(c^2/a)^{1/3},\qquad X\to-\infty.
\end{equation}
We first justify the initial estimate
\begin{equation}\label{eq:fixed-exit-data}
Y_\delta(-L)=Y_0(-L)+O(\delta)
\end{equation}
for a fixed sufficiently large $L$. In the entrance chart $\xi=rX_1$, $y=r^2$, $\eps=r^3E$, division of the vector field by $r$ gives
\begin{equation}\label{eq:entrance-chart}
\begin{aligned}
\dot r&=-\tfrac12rE\bigl(c+O(r)\bigr),\\
\dot X_1&=1-aX_1^2-brX_1^3+\tfrac12X_1E\bigl(c+O(r)\bigr)+O(r^2),\\
\dot E&=\tfrac32E^2\bigl(c+O(r)\bigr).
\end{aligned}
\end{equation}
The equilibrium $(r,E,X_1)=(0,0,a^{-1/2})$ has one negative eigenvalue, $-2\sqrt a$, and two center eigenvalues. A $C^2$ center manifold has the form $X_1=H(r,E)$. Its trace on $r=0$, $E>0$, is uniquely determined by the entrance condition $H(0,E)\to a^{-1/2}$ as $E\downarrow0$. Indeed, the difference of two such traces satisfies
\[
\frac{d\Delta}{dE}=-\left(\frac{4\sqrt a}{3cE^2}+O(E^{-1})\right)\Delta.
\]
Every nonzero solution is unbounded as $E\downarrow0$. Thus this trace is the Airy-selected orbit, irrespective of the usual nonuniqueness of center manifolds on other sides of the equilibrium.

On a fixed small section $E=E_0>0$, the identity $\eps=r^3E$ gives $r=\delta E_0^{-1/3}$. The corresponding data in the scaling chart are explicitly
\[
(X,Y)=\bigl(E_0^{-1/3}H(\delta E_0^{-1/3},E_0),E_0^{-2/3}\bigr).
\]
They depend $C^1$ on $\delta$, and at $\delta=0$ lie on the Airy-selected solution. The subsequent passage to $X=-L$ occurs over a regular compact orbit segment, proving~\eqref{eq:fixed-exit-data}. An actual trajectory tracking the attracting entrance is exponentially close to this selection: in~\eqref{eq:entrance-chart}, its transverse difference from $H(r,E)$ decays at a rate bounded away from zero, while $E$ grows from order $\eps$ to $E_0$ over a desingularized time of order $\eps^{-1}$. More explicitly, $\dot E$ is bounded above and below by positive multiples of $E^2$, so the passage time from $E\asymp\eps$ to fixed $E_0$ is comparable to $\eps^{-1}$; integration of the transverse contraction gives $O(e^{-c_0/\eps})$. Changing charts introduces only fixed powers of $\eps$, which can be absorbed by decreasing $c_0$. \rev{This is the entrance-chart construction used in the ordinary fold analysis of~\cite[Section 2, in particular Theorem 2.1]{KrupaSzmolyan2001}. The logarithmic coefficient is obtained directly from the tail estimates below.}

We now estimate the long outgoing tail. For $-\eta/\delta\leq X\leq-L$, let $Z=Y_\delta-Y_0$. Subtracting the two graph equations yields
\begin{equation}\label{eq:Z-log-equation}
Z'=A(X,\delta)Z+\frac{\delta B}{X}+R(X,\delta),
\end{equation}
with
\begin{equation}\label{eq:AR-bounds}
|A(X,\delta)|\leq C|X|^{-4},\qquad |R(X,\delta)|\leq C\bigl(\delta|X|^{-3}+\delta^2\bigr).
\end{equation}
To verify these bounds, expand the right-hand side of~\eqref{eq:scaling-equation} at $Y_0$. Its first-order term in $\delta$ is
\[
\delta\left[\frac{dX}{aX^2-Y_0}-\frac{cbX^3}{(aX^2-Y_0)^2}\right]=\frac{\delta B}{X}+O(\delta|X|^{-3}).
\]
The second-order remainder is $O(\delta^2)$ uniformly for $\delta|X|\leq\eta$, and the divided difference in $Y$ is $O(|X|^{-4})$. Starting from~\eqref{eq:fixed-exit-data}, a bootstrap and Gronwall's inequality give $Z=O(\delta(1+\log|X|))$. In particular $Y_\delta$ stays bounded and the denominators used in~\eqref{eq:AR-bounds} remain bounded below by a positive multiple of $X^2$.

For $X\leq s\leq-L$, the propagation kernel $\Phi(X,s)=\exp(\int_s^XA(u,\delta)\dd u)$ satisfies $\Phi(X,s)-1=O(|s|^{-3})$. Variation of constants in~\eqref{eq:Z-log-equation} therefore gives
\begin{equation}\label{eq:Z-exit}
Z(-\eta/\delta)=\delta B\log\frac{\eta}{L\delta}+O(\delta).
\end{equation}
Indeed, the integral of $R$ is $O(\delta)$, and replacing $\Phi$ by $1$ in the logarithmic term changes the result by $O(\delta)$. Combining~\eqref{eq:airy-asymptotic} and~\eqref{eq:Z-exit}, and multiplying by $\delta^2$, proves~\eqref{eq:fold-exit}.

Finally, two descending graphs $y_1,y_2$ in the local fold neighborhood satisfy
\begin{equation}\label{eq:fold-height-difference}
\frac{d(y_1-y_2)}{d\xi}=\frac{\eps g(\xi)}{(f(\xi)-y_1)(f(\xi)-y_2)}(y_1-y_2).
\end{equation}
Here $g>0$ and both denominators are positive. Integration in decreasing $\xi$ cannot increase their height difference. Thus the exponentially small ambiguity of the attracting entrance does not affect~\eqref{eq:fold-exit}.
\end{proof}

The coefficient $B$ in~\eqref{eq:fold-exit} is the residue of $g(\xi)/f(\xi)$ at zero. A fixed regular fast segment following the fold changes the height by only $O(\eps)$, so the two displayed coefficients do not depend on the choice of a later regular section.

\subsubsection{Nonzero equal-height extrema}
\begin{proposition}\label{prop:no-equal-nonzero}
Alternative \textup{(ii)} of Lemma~\ref{lem:canard-classification} cannot occur as a limit of periodic orbits for a fixed quintic polynomial.
\end{proposition}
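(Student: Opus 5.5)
We would argue by contradiction, using the fold asymptotics of Lemma~\ref{lem:fold-exit} on both ends of the horizontal connection. After the time/sign normalization used before, assume the connection is an upper crossing at height $h>0$ between quadratic maxima $x_1<0<x_2$ of $F$, with $F(x_1)=F(x_2)=h$ and $F<h$ on $(x_1,x_2)$, as in Lemma~\ref{lem:canard-classification}. By Lemma~\ref{lem:singular-limit}, the periodic orbits $\gamma_n$ track a positive-length attracting arc into the fold at $x_1$. They cross the fast segment to the right and then track a positive-length repelling arc beyond $x_2$.

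\textbf{Step 1 (forward exit at $x_1$).} In local coordinates $\xi=\pm(x-x_1)$, $y-h$, reflected so that the system has the form \eqref{eq:local-fold-system}, we have $f$ with $a_{(1)}=|F''(x_1)|/2$, and $g$ is $\pm x$, so $c_{(1)}=|x_1|$. Lemma~\ref{lem:fold-exit} then gives the height of $\gamma_n$ on a fixed section $x=x_1+\eta$ as $h+\alpha_1\eps^{2/3}+\beta_1\eps\log\eps+O(\eps)$. Here $\alpha_1$ is, up to sign, $\kappa(c_{(1)}^2/a_{(1)})^{1/3}$, and $\beta_1$ is, up to sign, $\tfrac13$ times the residue of $x/(F(x)-h)$ at $x_1$. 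Across the regular fast segment from $x_1+\eta$ to $x_2-\eta$ the height changes by only $O(\eps)$, as remarked after Lemma~\ref{lem:fold-exit}.

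\textbf{Step 2 (backward exit at $x_2$).} In reversed time, the repelling arc beyond $x_2$ becomes attracting and $x_2$ is again an ordinary fold. The last sentence of Lemma~\ref{lem:fold-exit} lets us apply it to $\gamma_n$ run backward. This gives the height on $x=x_2-\eta$ as $h+\alpha_2\eps^{2/3}+\beta_2\eps\log\eps+O(\eps)$, with the analogous constants built from $a_{(2)}=|F''(x_2)|/2$, $c_{(2)}=x_2$, and the residue at $x_2$. Both expansions describe the same orbit, so comparing them forces $\alpha_1=\alpha_2$ and then $\beta_1=\beta_2$. Before matching, we would check the orientation conventions carefully so that both leading terms carry the same sign $-\kappa$.

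\textbf{Step 3 (algebraic contradiction).} Since $\deg F=5$, we can write $F(x)-h=a_5(x-x_1)^2(x-x_2)^2(x-r)$. Both points are maxima, which requires $x_1,x_2<r$, and $F(0)=0$ gives $r>0$. The condition $\alpha_1=\alpha_2$ reads $x_1^2/|F''(x_1)|=x_2^2/|F''(x_2)|$. With $F''(x_i)=2a_5(x_i-x_j)^2(x_i-r)$, this reduces to $x_1x_2=r(x_1+x_2)$. Next we compute the residues of $x/\bigl((x-x_1)^2(x-x_2)^2(x-r)\bigr)$ at $x_1$ and $x_2$ explicitly as derivatives of $x/\bigl((x-x_j)^2(x-r)\bigr)$. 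We then impose $\beta_1=\beta_2$ together with $x_1x_2=r(x_1+x_2)$, $x_1<0<x_2<r$, and check that the resulting system has no solution. The residue theorem (the three residues sum to zero) should simplify this check.

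I expect Step 3 to be the main obstacle. It is not clear a priori that the leading $\eps^{2/3}$ condition alone fails: the relation $x_1x_2=r(x_1+x_2)$ is compatible with $x_1<0<x_2<r$ when $x_1+x_2<0$. So the logarithmic term is genuinely needed, and the residue identity must be shown incompatible with it. A secondary concern is verifying in Step 2 that the backward tracking really selects a unique slow trajectory up to exponentially small error, so that the equality of expansions is forced rather than merely an inequality.
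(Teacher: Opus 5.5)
Your route is the paper's route: Lemma~\ref{lem:fold-exit} at both folds, matching of the $\eps^{2/3}$ and $\eps\log\eps$ coefficients, and an algebraic incompatibility. The paper normalizes to $P(X)=(X^2-1)^2(1+eX)$ and compares heights at $X=\lambda$ via exponential closeness. Your comparison at $x=x_2-\eta$ with $O(\eps)$ errors is equally valid. It also removes your secondary concern, since an $O(\eps)$ discrepancy is negligible against $\eps|\log\eps|$.

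The genuine gap is Step~3, which is the whole content of the proposition and is only announced. The relative sign of $\beta_1,\beta_2$ is also left open. For the sign, put $\tilde y=h-y$ and take $\xi=x_1-x$ at $x_1$ (forward time) and $\xi=x-x_2$ at $x_2$ (reversed time). Both exit heights then become
\[
h+\kappa\bigl(c_{(i)}^2/a_{(i)}\bigr)^{1/3}\eps^{2/3}+\tfrac13B_i\eps\log\eps+O(\eps),\qquad B_i=-\operatorname{Res}_{x=x_i}\frac{x}{F(x)-h}.
\]
So the equation you must refute is equality of the two residues. With the opposite sign, the residue theorem would force $r=0$, so no spurious solution appears, but you must refute the correct equation. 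Also, Step~3 uses $a_5>0$ to get $x_1,x_2<r$. The paper's reduction (time reversal with $F\mapsto-F$) flips the sign of $a_5$. Reduce minima to maxima instead by $(x,y)\mapsto(-x,-y)$, which replaces $F(x)$ by $-F(-x)$ and preserves both $a_5$ and the time direction.

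Here is the refutation. Write $x_1=-p$ and $x_2=q$. The relation $x_1x_2=r(x_1+x_2)$ together with $r>0$ forces $p>q$ and $r=pq/(p-q)$, hence $r-x_1=p^2/(p-q)$ and $r-x_2=q^2/(p-q)$. The residue at a double root is
\[
\operatorname{Res}_{x=x_j}\frac{x}{F(x)-h}=-\frac1{a_5}\Bigl[\frac{r}{(x_j-x_k)^2(x_j-r)^2}+\frac{2x_j}{(x_j-x_k)^3(x_j-r)}\Bigr],\qquad k\neq j.
\]
This gives $\operatorname{Res}_{x=x_1}=p(2p+q)/\bigl(a_5(p+q)^3(r-x_1)^2\bigr)>0$. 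The degree gap is four, so the residues at $x_1,x_2,r$ sum to zero. Equal fold residues would therefore force
\[
\operatorname{Res}_{x=x_1}=-\tfrac12\operatorname{Res}_{x=r}=-\frac{r}{2a_5(r-x_1)^2(r-x_2)^2}<0,
\]
a contradiction. Equivalently, $\operatorname{Res}_{x_1}-\operatorname{Res}_{x_2}$ is a positive multiple of $p^4+2p^3q+2pq^3+q^4$. This is the paper's nonvanishing factor $2+2s-s^2$ rewritten with $s=2pq/(p^2+q^2)$. With this computation supplied, your proof is complete.
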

\begin{proof}
\rev{Reversing time and replacing $y$ by $-y$ and $F$ by $-F$ reduces equal maxima to equal minima.} An affine change of $x$, a translation and positive scaling of $y$, and a positive rescaling of time put the two minima at $X=\pm1$, at height zero, and give
\begin{equation}\label{eq:equal-fold-normal-form}
\dot X=Y-P(X),\qquad \dot Y=-\eps(X-\lambda),\qquad P(X)=(X^2-1)^2(1+eX),
\end{equation}
where $\lambda\in(-1,1)$ and $0<|e|<1$. The parameter written as $\eps$ in~\eqref{eq:equal-fold-normal-form} is a fixed positive multiple of the original parameter. The factorization follows from the two double roots of the level polynomial. Positivity at both minima gives $|e|<1$, and degree five gives $e\neq0$.

Continue the right attracting slow manifold forward through $X=1$, and the left repelling slow manifold backward through $X=-1$, to the common section $X=\lambda$. Let their heights there be $y_R$ and $y_L$. \rev{Lemma~\ref{lem:fold-exit}, applied directly on the right and after $\xi=-X-1$, $s=-t$, and $y=Y$ on the left, gives}
\begin{equation}\label{eq:matching-expansion}
y_R-y_L=-\kappa(A_R-A_L)\eps^{2/3}-\frac{B_R-B_L}{3}\eps\log\eps+O(\eps),
\end{equation}
where
\begin{equation}\label{eq:matching-A}
A_R=\left(\frac{(1-\lambda)^2}{4(1+e)}\right)^{1/3},\qquad A_L=\left(\frac{(1+\lambda)^2}{4(1-e)}\right)^{1/3},
\end{equation}
\begin{equation}\label{eq:matching-B}
B_R=\frac{\lambda(1+2e)-e}{4(1+e)^2},\qquad B_L=\frac{e-\lambda(1-2e)}{4(1-e)^2}.
\end{equation}
For example, at $X=1$ the quadratic and cubic coefficients of $P$ are $4(1+e)$ and $4(1+2e)$, and the coefficients of $X-\lambda$ are $1-\lambda$ and $1$. \begin{revision}
For the left fold, set $\xi=-X-1$, $s=-t$, and $y=Y$. Then
\[
\frac{d\xi}{ds}=y-f_L(\xi),\qquad \frac{dy}{ds}=-\varepsilon g_L(\xi),\qquad
f_L(\xi)=P(-1-\xi),\quad g_L(\xi)=1+\lambda+\xi.
\]
Consequently,
\[
a_L=4(1-e),\quad b_L=4(1-2e),\quad c_L=1+\lambda,\quad d_L=1,
\]
and substitution into $B_L=d_L/a_L-c_Lb_L/a_L^2$ gives the displayed left coefficient, including its sign.
\end{revision}

The equality $A_R=A_L$ holds if and only if
\begin{equation}\label{eq:lambda-star}
\lambda=\lambda_*=-\frac{e}{1+\sqrt{1-e^2}}.
\end{equation}
Also,
\begin{equation}\label{eq:B-difference}
B_R-B_L=\frac{\lambda(1-3e^2)-e(1+e^2)}{2(1-e^2)^2}.
\end{equation}
Writing $s=\sqrt{1-e^2}$, the numerator of~\eqref{eq:B-difference} at~\eqref{eq:lambda-star} equals $-es(2+2s-s^2)/(1+s)$, which is nonzero. Thus the two coefficients in~\eqref{eq:matching-expansion} cannot vanish simultaneously. For fixed $e,\lambda$, the separation is of order $\eps^{2/3}$ or $\eps|\log\eps|$.

Suppose nevertheless that periodic orbits converge to this \rev{canard} connection. Choose a fixed regular cross-section $X=1+\eta$ in the right attracting arc, with another positive-length attracting subarc preceding it. Fix the reference slow manifold by extending it across this preceding subarc. Exponential tracking of normally hyperbolic slow manifolds~\cite{Fenichel1979} makes the orbit exponentially close to the selected right manifold on that section. This estimate remains valid up to $X=\lambda$, since the height difference does not increase. Indeed, the difference $D$ of the two descending graphs satisfies
\begin{equation}\label{eq:matching-height-difference}
D'=\frac{\eps(X-\lambda)}{(P(X)-y_1)(P(X)-y_2)}D,\qquad \lambda\leq X\leq1+\eta.
\end{equation}
The coefficient is nonnegative, and integration proceeds toward decreasing $X$. Thus $|D(\lambda)|\leq|D(1+\eta)|$. The condition $P-y_j>0$ persists: at a zero of $\dot X$ with $X>\lambda$, its derivative is $-\eps(X-\lambda)<0$.

Applying the same argument backward in time to the left repelling arc shows that the same periodic orbit is exponentially close to $y_L$ on $X=\lambda$. Consequently $|y_R-y_L|=O(e^{-c_0/\eps})$ for some $c_0>0$. This contradicts~\eqref{eq:matching-expansion}--\eqref{eq:B-difference}.
\end{proof}

\subsubsection{Equal-height extrema involving the origin}
\begin{proposition}\label{prop:no-equal-origin}
Alternative \textup{(iii)} of Lemma~\ref{lem:canard-classification} cannot occur as a limit of periodic orbits for a fixed quintic polynomial.
\end{proposition}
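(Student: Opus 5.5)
The plan is to show that in configuration (iii) the two slow manifolds that would have to be joined miss each other at a common section by an amount of exact order $\eps^{2/3}$. The reason is that the two endpoints of the horizontal connection behave differently. The nonzero extremum is an \emph{ordinary} fold, so Lemma~\ref{lem:fold-exit} applies there with $c\neq0$ and produces a deviation of order $\eps^{2/3}$. At the origin, by contrast, $g(\xi)=x$ vanishes: the fold coincides with the equilibrium. The slow manifold therefore passes that extremum with only an $O(\eps)$ deviation from the critical curve, and no $\eps^{2/3}$ delay.

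\textbf{Normalization.} Since both extrema are quadratic and lie at height $0=F(0)$, we have $a_1=0$ and $a_2\neq0$. The level polynomial factors as
\[
F(x)=x^2(x-x_0)^2(\alpha x+\beta),\qquad x_0\neq0,\quad \beta\neq0.
\]
As in Proposition~\ref{prop:no-equal-nonzero}, we reverse time and replace $(y,F)$ by $(-y,-F)$ if needed, reducing to the case of two minima. Using the symmetry $x\mapsto -x$ if needed, we arrange $x_0>0$. The horizontal segment then lies at height $0$ on $(0,x_0)$, and along it $F\geq0$ with $F>0$ in the interior. We fix the section $x=\lambda=x_0/2$.

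\textbf{Step 1 (ordinary fold).} Apply Lemma~\ref{lem:fold-exit} at $x_0$, with
\[
f(\xi)=F(x_0+\xi),\qquad g(\xi)=x_0+\xi,\qquad c=x_0>0.
\]
The attracting slow manifold through $x_0$ then crosses $x=\lambda$ at height
\[
y_R=-\kappa\Bigl(\frac{x_0^2}{a}\Bigr)^{1/3}\eps^{2/3}+O(\eps|\log\eps|).
\]
This uses the monotone height-difference identity, as in~\eqref{eq:matching-height-difference}, to carry the estimate from the fold section to $x=\lambda$. The horizontal passage over a compact regular segment changes the height only by $O(\eps)$.

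\textbf{Step 2 (origin endpoint).} On the origin side we need an $O(\eps)$ bound for the repelling slow manifold, continued backward in time from a fixed section on the repelling arc adjacent to the origin, down to a section near $x=0$ and then to $x=\lambda$. The tool is the strip argument of Lemma~\ref{lem:compact}, applied to~\eqref{eq:v-dot}. Near the origin $F'(x)=2a_2x+O(x^2)$, so the ratio $|x|/|F'(x)|$ stays bounded by some $C$ on a fixed punctured neighbourhood; this is exactly where $c=0$ helps. Consequently, in backward time along the repelling branch the strip $|v|\leq 2C\eps$ is invariant, since on its edges $\dot v$ has the appropriate sign. Fenichel tracking places the orbit inside this strip initially. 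Following it up to $|x|$ of order $\eps^{1/2}$, where the strip degenerates, I would continue through the inner region with the scaling $x=\eps^{1/2}X$, $y=\eps Y$. There the limit system $\dot X=Y-a_2X^2$, $\dot Y=-X$ is regular on compact sets, so it yields $y=O(\eps)$ on a section $x=\pm L\eps^{1/2}$. From there the horizontal passage to $x=\lambda$, at height $O(\eps)$ where $F\geq c_1>0$ on compacta of $(0,x_0)$, keeps the height at $y_L=O(\eps)$. I expect this inner passage near the origin, where the repelling branch meets the equilibrium and where one must check on which side of the origin the orbit leaves, to be the main obstacle. If the scaling argument is awkward, I would instead try a barrier curve $y=F(x)\pm K\eps$ directly, using $\dot E=-\eps xF(x)$ and~\eqref{eq:scalar-halves} to control the crossing of the $y$-axis.

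\textbf{Step 3 (contradiction).} Suppose periodic orbits converge to such a connection. Exactly as in the proof of Proposition~\ref{prop:no-equal-nonzero}, exponential tracking forward along the attracting arc and backward along the repelling arc makes the orbit $O(e^{-c_0/\eps})$-close to both $y_R$ and $y_L$ on $x=\lambda$. Steps 1 and 2, however, give
\[
|y_R-y_L|\geq \tfrac12\kappa\bigl(x_0^2/a\bigr)^{1/3}\eps^{2/3}
\]
for small $\eps$, which is a contradiction. No coefficient cancellation can occur here, unlike in~\eqref{eq:lambda-star}, because only one side contributes an $\eps^{2/3}$ term.
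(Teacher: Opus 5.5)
Your mechanism is the right one, and it is also the paper's: the ordinary fold at $x_0$ forces a drop of order $\eps^{2/3}$, while the quadratic minimum at the origin, where $g$ vanishes, allows only $O(\eps)$. The gap is in \emph{where} you compare.

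\textbf{Step~2 fails beyond the axis.} You continue the repelling slow manifold backward through the origin to $x=\lambda=x_0/2$ and claim it arrives by a horizontal passage at height $O(\eps)$. That is not what happens. The origin is a canard point. In your inner chart the repelling manifold is, to leading order, the invariant parabola $Y=a_2X^2-1/(2a_2)$. Followed backward in time, it leaves the inner region at $X=+L$ only about $\eps/(2a_2)$ below the \emph{attracting} branch of $y=F(x)$ to the right of the origin. There $\dot x\approx-\eps/(2a_2)$ is slow, so the orbit is not on a fast fibre.

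Along that branch, backward time expands. For two returning graphs in $x>0$,
\[
D'=\frac{\eps x}{(F-y_1)(F-y_2)}\,D,
\]
whose coefficient is positive in the direction of increasing $x$. Differences at distance $O(\eps)$ from $y=F$ are therefore amplified by roughly $\exp\bigl(\eps^{-1}\int_0^xF'(s)^2s^{-1}\dd s\bigr)$. The fate of the continuation is thus decided by its splitting from the attracting slow manifold at the axis. This splitting is only algebraically small (of order $\eps^{3/2}$ when $a_3\neq0$), and its sign is fixed by the sign of \eqref{eq:odd-difference}.
\begin{itemize}
\item In one case the continuation peels off to the right at $x\sim\sqrt{\eps|\log\eps|}$. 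It crosses $x=\lambda$ at height of order $\eps|\log\eps|$, not $O(\eps)$.
\item In the other case it reaches $y=F$ and turns back around the origin, so it never reaches $x=\lambda$ at all.
\end{itemize}
Both signs occur in configuration~(iii): for $F=x^2(x-x_0)^2(\alpha x+\beta)$ one has $a_3=x_0(x_0\alpha-2\beta)$, which can have either sign.

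\textbf{Step~3 fails for the same reason.} Exponential closeness cannot be carried from the repelling arc to $x=\lambda$. Backward-time contraction of height differences holds only while $x<0$.

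\textbf{The repair: compare on the axis, at $q_\eps$.} Your strip argument already suffices there and needs no inner scaling.
\begin{itemize}
\item Since the minimum at $0$ is quadratic, $K=2\sup_{-x_1\le x<0}|x|/|F'(x)|$ is finite.
\item The strip $-K\eps\le v\le0$ is backward invariant for $x<0$ all the way to $x=0$. It does not degenerate at $|x|\sim\eps^{1/2}$.
\item Fenichel tracking places the periodic orbit inside this strip at $x=-x_1$, so $q_\eps\ge-K\eps$.
\item Step~1, together with $\dot y=-\eps x<0$ on $x>0$, gives $q_\eps\le-\tfrac12\kappa(x_0^2/a)^{1/3}\eps^{2/3}$.
\end{itemize}
These two bounds are incompatible for small $\eps$. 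This is exactly the paper's proof. The paper gets the lower bound $q_\eps>-2\eps/\alpha$ instead by the scalar comparison $\eps w'\ge\alpha w-y$, $w(q_\eps)=0$, on the half-graph adjacent to the origin. That route avoids constructing slow manifolds on that side.

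A minor point on your normalization: $x\mapsto-x$ alone is not a symmetry of \eqref{eq:system}. Use $(x,y,t)\mapsto(-x,y,-t)$ with $F\mapsto F(-\,\cdot)$, which preserves minima and maps configuration~(iii) to itself.
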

\begin{proof}
Reduce again to two minima at height zero. By Lemma~\ref{lem:canard-classification}, the minimum at the origin is quadratic. Choose $w_0>0$ and $\alpha>0$ such that $F_\pm(w)\geq\alpha w$ for $0\leq w\leq w_0$.

The proposed connection contains a fixed local slow arc on one side of the origin. Choose $h>0$ so that $h$ and $2h$ belong to its regular interior. As in Lemma~\ref{lem:no-origin-transition}, convergence at these two levels and unimodality imply $0\leq w_\eps(y)\leq w_\eps(h)\leq w_0$ for $q_\eps\leq y\leq h$. The scalar equation gives $\eps w'\geq\alpha w-y$, $w(q_\eps)=0$. Comparison with the linear equation yields
\begin{equation}\label{eq:origin-comparison}
w(h)\geq\frac h\alpha+\frac\eps{\alpha^2}-\left(\frac{q_\eps}{\alpha}+\frac\eps{\alpha^2}\right)\exp\left[\frac{\alpha(h-q_\eps)}{\eps}\right].
\end{equation}
If $q_\eps\leq-2\eps/\alpha$, the right-hand side is at least $h/\alpha+\eps/\alpha^2+(\eps/\alpha^2)e^{\alpha h/\eps}$ and tends to infinity. Thus boundedness forces $q_\eps>-2\eps/\alpha$ for small $\eps$.

At the other, nonzero minimum, ordinary fold passage gives an outgoing height $-c_1\eps^{2/3}+O(\eps|\log\eps|)$, with $c_1>0$. This is a forward passage if the nonzero minimum is on the right, and a backward passage if it is on the left. In either case, $y$ decreases as the trajectory proceeds from that fold to the axis. Therefore its lower axis intersection must satisfy $q_\eps\leq-\tfrac12c_1\eps^{2/3}$, contradicting~\eqref{eq:origin-comparison}. \rev{Equal maxima are handled by reversing time and replacing $y$ by $-y$ and $F$ by $-F$.}
\end{proof}

\begin{corollary}\label{cor:all-common}
Every singular limit supplied by Lemma~\ref{lem:singular-limit} is common.
\end{corollary}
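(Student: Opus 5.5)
The corollary follows by contradiction from the classification and the three exclusion results. Suppose some singular limit supplied by Lemma~\ref{lem:singular-limit} is not common. By Lemma~\ref{lem:singular-limit}, the limit is a finite closed concatenation that contains positive-length regular critical arcs. By Definition~\ref{def:common-canard}, a non-common limit must therefore contain regular arcs of both normal stabilities, so it is a canard. Lemma~\ref{lem:canard-classification} then provides an attracting-to-repelling transition of type (i), (ii) or (iii).

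We rule out each type in turn. Type (i) is excluded by Lemma~\ref{lem:no-origin-transition}, type (ii) by Proposition~\ref{prop:no-equal-nonzero}, and type (iii) by Proposition~\ref{prop:no-equal-origin}. Each of these results is stated for limits of periodic orbits of the fixed quintic $F$, which is exactly the setting of Lemma~\ref{lem:singular-limit}. No transition is left, so the limit is common.

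The one point to check is the normalization used in the exclusion results: reversing time and replacing $(y,F)$ by $(-y,-F)$. This change maps periodic orbits to periodic orbits and preserves Hausdorff convergence. It exchanges attracting and repelling arcs, but it keeps the set of canard limits invariant, so the exclusions apply in both orientations. I expect no further obstacle. All the analytic work (scalar comparison, the fold-exit expansion and its logarithmic term) is already contained in the cited lemmas, and the corollary only combines them.
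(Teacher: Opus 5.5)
Your proposal is correct and follows the paper's own proof: a non-common limit is a canard, Lemma~\ref{lem:canard-classification} reduces it to transitions (i)--(iii), and these are excluded by Lemma~\ref{lem:no-origin-transition} and Propositions~\ref{prop:no-equal-nonzero} and~\ref{prop:no-equal-origin}. Your remark on the normalization $(t,y,F)\mapsto(-t,-y,-F)$ is harmless but not needed at this level, since those results already handle that reduction internally.
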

\begin{proof}
Combine Lemmas~\ref{lem:canard-classification} and~\ref{lem:no-origin-transition} with Propositions~\ref{prop:no-equal-nonzero} and~\ref{prop:no-equal-origin}.
\end{proof}

\section{Hyperbolicity of limit cycles}\label{sec:hyperbolicity}
Having excluded \rev{canard singular limits}, we now use the divergence integral to determine the stability of periodic orbits near their common limits.

\rev{For a chosen periodic orbit $\gamma$ at parameter $\varepsilon$, write $\rho(\gamma,\varepsilon)$ for its nontrivial Floquet multiplier. This notation distinguishes different periodic orbits at the same parameter.} The divergence formula gives
\begin{equation}\label{eq:Floquet-divergence}
\log\rev{\rho(\gamma,\eps)}=\oint-F'(x)\dd t.
\end{equation}
On either open half-plane, $\dot y=-\eps x$ yields the exact identity
\begin{equation}\label{eq:divergence-height}
\eps\int-F'(x)\dd t=\int\frac{F'(x)}x\dd y.
\end{equation}
This identity also controls passages near degenerate critical points.

\begin{lemma}\label{lem:divergence-limit}
Suppose periodic orbits $\gamma_n$ with $\eps_n\to0^+$ converge as in Lemma~\ref{lem:singular-limit} to a path $C$. Then
\begin{equation}\label{eq:divergence-limit}
\eps_n\log\rev{\rho(\gamma_n,\eps_n)}\longrightarrow I(C),\qquad I(C)=\sum_{\text{directed critical arcs of }C}\int\frac{F'(x)^2}x\dd x.
\end{equation}
In particular, $I(C)<0$ for an attracting common limit and $I(C)>0$ for a repelling common limit.
\end{lemma}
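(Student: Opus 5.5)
We work with the exact identity~\eqref{eq:divergence-height}, which on each open half-plane gives
\[
\eps_n\log\rho(\gamma_n,\eps_n)=\int_{\gamma_n^+}\frac{F'(x)}{x}\dd y+\int_{\gamma_n^-}\frac{F'(x)}{x}\dd y,
\]
where $\gamma_n^\pm$ are the right and left half-orbits, each traversed in the direction of the flow. The integrand is bounded wherever $|x|$ is bounded below. Near the axis the factor $\eps_n$ and the relation $\dd y=-\eps_n x\dd t$ show that the contribution of a passage through $|x|<\mu$ equals $\eps_n\int -F'\dd t$ over that passage. I will bound this time directly. The orbit crosses the strip $|x|<\mu$ with $\dot x=y-F(x)$. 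Since $p_n,q_n$ stay bounded away from $0$ by Lemma~\ref{lem:no-collapse} and Remark~\ref{rem:height-collapse}, and $F(x)=O(\mu)$ there, we have $|\dot x|\geq c>0$ for small $\mu$. Hence each axis passage lasts $O(\mu)$, and its contribution is $O(\eps_n\mu)\to0$.

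The approach is to split each half-orbit, viewed as a graph $x=x_{\pm,n}(y)$ over $(q_n,p_n)$, into three kinds of pieces. The first kind consists of heights where the limit $x_\pm(y)$ lies on a compact subarc of a regular critical graph. The second kind consists of small neighborhoods of the finitely many jump heights, which are the horizontal layer segments and the critical values of $F$. The third kind consists of small neighborhoods of the axis heights $p,q$.

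On pieces of the first kind, Lemma~\ref{lem:singular-limit} gives uniform convergence $x_{\pm,n}\to x_\pm$, with $y=F(x_\pm(y))$. Hence
\[
\int\frac{F'(x_{\pm,n})}{x_{\pm,n}}\dd y\to\int\frac{F'(x)}{x}F'(x)\dd x
\]
along the directed critical arc, which is exactly the summand of $I(C)$.

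The horizontal pieces are the main obstacle, because there $y$ ranges over an interval of length $o(1)$ while $F'/x$ stays bounded away from the axis. I will first try the crude bound $|\int F'/x\dd y|\leq \sup|F'/x|\cdot(\text{height interval length})$. This requires the height window around each jump or fold to be small. The total variation of $y$ along the half-orbit over a window $[h-\nu,h+\nu]$ is exactly $2\nu$, since $y$ is monotone on each half-orbit. So the contribution is $O(\nu/\mu)$, and it vanishes as $\nu\to0$ after $n\to\infty$.

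Monotonicity of $y$ is the key simplification. It turns every piece into an integral in $y$ over a subinterval of a fixed interval, so degenerate folds and slow passages near critical points need no separate Airy analysis. The remaining care is near the axis heights, where $|x|$ may be small and $F'/x$ unbounded. For these I use the time-based bound from the first paragraph instead. Letting $n\to\infty$, then $\nu\to0$, then $\mu\to0$ yields~\eqref{eq:divergence-limit}.

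For the sign, I use that the limit is common by Corollary~\ref{cor:all-common}. On a directed attracting arc the reduced flow~\eqref{eq:reduced} gives $\dd x/\dd\tau=-x/F'$, so $x$ moves toward the origin. Thus on the right half-plane $\dd x<0$, giving $\int F'^2/x\dd x<0$, and symmetrically on the left. Every summand is then negative, and the sum is nonzero because Lemma~\ref{lem:singular-limit} supplies positive-length arcs. The repelling case follows by reversing the signs.
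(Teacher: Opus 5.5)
Your strategy is essentially the paper's: integrate $F'(x)/x$ in the height variable along the two half-orbits, use $\dd y=F'(x)\dd x$ on the regular arcs, make the exceptional heights negligible using a bounded integrand and monotonicity of $y$, and read off the sign from the reduced flow. The paper phrases the regular part as dominated convergence with almost-everywhere convergence of the half-graphs; your uniform-convergence-plus-windows decomposition is an equivalent hands-on version.

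There is one step that fails as written: the near-axis estimate. Lemma~\ref{lem:no-collapse} and Remark~\ref{rem:height-collapse} exclude only $p_n\to0$ and $q_n\to0$ \emph{simultaneously}. In the limit they give $p>q$, not $p>0>q$, so they supply no uniform lower bound on $|y|$ near the axis. When $a_1=0$, one axis height can tend to zero. Suppose the limit path descends an attracting arc into a quadratic minimum of $F$ at the origin and leaves it horizontally at height zero. This is a common configuration that is not excluded; compare the intervals with an endpoint at the origin in Lemma~\ref{lem:shared-arc}. The comparison argument in the proof of Proposition~\ref{prop:no-equal-origin} then gives $-2\eps_n/\alpha<q_n<0$. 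Along that slow arc the orbit spends time at least of order $\mu/\eps_n$ in $|x|<\mu$, so your bound $O(\eps_n\mu)$ is false. The actual contribution is about $\int_0^\mu F'(x)^2/x\dd x=O(\mu^2)$, which does not vanish as $n\to\infty$ with $\mu$ fixed.

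The repair is the case distinction the paper makes. If $a_1=0$, then $F'(x)/x$ extends to a polynomial and is bounded on the compact region of Lemma~\ref{lem:compact}. No time estimate is then needed: the windows around $p$, $q$ and the critical value $0$ contribute $O(\nu)$ by your own height-length bound. If $a_1\neq0$, the lower bound you need comes from Lemma~\ref{lem:origin-odd}, not Lemma~\ref{lem:no-collapse}. All orbits avoid a fixed rectangle about the origin, so in a thin strip $|x|<\mu$ one has $|y|\geq h_0$ and $|\dot x|\geq h_0/2$. Then $x$ is monotone on each of the two visits, and your $O(\eps_n\mu)$ estimate is correct. With this correction the rest of your argument, including the sign computation, goes through.
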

\begin{proof}
If $a_1=0$, the function $H(x)=F'(x)/x$, continuously extended at zero, is a polynomial. Formula~\eqref{eq:divergence-height} becomes
\begin{equation}\label{eq:divergence-half-graphs}
\eps\log\rev{\rho(\gamma,\eps)}=\int_{q_\eps}^{p_\eps}\bigl[H(x_-(y))-H(x_+(y))\bigr]\dd y.
\end{equation}
\rev{Extend the integrand in~\eqref{eq:divergence-half-graphs} by zero outside $[q_\varepsilon,p_\varepsilon]$ to a fixed height interval containing all these intervals. Uniform compactness bounds the extended integrands, and the convergence of the endpoints and half-graphs gives almost-everywhere convergence. Dominated convergence therefore applies on this fixed interval.} The finitely many horizontal segments have zero measure in the height variable. On each remaining inverse branch, $dy=F'(x)\,dx$, proving~\eqref{eq:divergence-limit}.

If $a_1\neq0$, Lemma~\ref{lem:origin-odd} gives a fixed open rectangle around the origin avoided by all the orbits. Choose a sufficiently narrow strip $|x|<\eta$. Inside it, the orbits have $|y|\geq h_0>0$ and $|y-F(x)|\geq h_0/2$. Their total crossing time in the strip is $O(\eta)$, by the total-variation bound for $x$. Its contribution to $\eps\log\rev{\rho(\gamma,\eps)}$ is $O(\eps\eta)$. Outside the strip, $F'(x)/x$ is bounded and dominated convergence applies as before. The limiting critical arcs themselves avoid a smaller strip near the axis.

Finally, \eqref{eq:reduced} shows that each attracting regular critical arc contributes negatively to the directed integral, while each repelling arc contributes positively. There is a positive-length regular arc in each half-plane. For a common path, the sum is therefore strictly negative in the attracting case and strictly positive in the repelling case.
\end{proof}

\begin{proposition}[Uniform Floquet separation for fixed $F$]\label{prop:uniform-hyperbolicity}
For each fixed quintic $F$, there exist $c_F>0$ and $\eps_F>0$ such that every periodic orbit $\gamma$ of~\eqref{eq:system}, with $0<\eps<\eps_F$, satisfies
\begin{equation}\label{eq:uniform-Floquet}
|\eps\log\rev{\rho(\gamma,\eps)}|\geq c_F.
\end{equation}
Consequently every such orbit is hyperbolic, and its multiplier satisfies either $\rev{\rho(\gamma,\eps)}\leq e^{-c_F/\eps}$ or $\rev{\rho(\gamma,\eps)}\geq e^{c_F/\eps}$. Along any convergent sequence as $\eps\to0^+$, its stability eventually agrees with the normal stability of its common limit.
\end{proposition}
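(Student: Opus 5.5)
The plan is to argue by contradiction and extract a convergent subsequence. Suppose no such $c_F,\eps_F$ exist. Then there are periodic orbits $\gamma_n$ of~\eqref{eq:system} with parameters $\eps_n\to0^+$ and
\[
\eps_n\log\rho(\gamma_n,\eps_n)\to0 .
\]
This also covers the case where $\eps_n$ does not tend to zero but stays below any proposed threshold: we simply pick $\eps_n<1/n$. By Lemma~\ref{lem:compact} all $\gamma_n$ lie in a fixed rectangle $K$ once $\eps_n\leq\eps_1$. Lemma~\ref{lem:singular-limit} then gives a subsequence converging in the Hausdorff metric to a nontrivial closed slow--fast path $C$. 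This path has a positive-length regular critical arc in each open half-plane.

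By Corollary~\ref{cor:all-common}, $C$ is common. Lemma~\ref{lem:divergence-limit} then gives $\eps_n\log\rho(\gamma_n,\eps_n)\to I(C)$, and $I(C)\neq0$, with sign determined by the normal stability of $C$. This contradicts the assumed convergence to $0$ and proves~\eqref{eq:uniform-Floquet}.

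The remaining assertions follow directly. The bound $|\log\rho|\geq c_F/\eps$ gives $\rho\leq e^{-c_F/\eps}$ or $\rho\geq e^{c_F/\eps}$. In particular $\rho\neq1$, so every periodic orbit is hyperbolic.

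For the stability statement, take any sequence $\gamma_n$ converging to a common limit $C$ with $\eps_n\to0^+$. Lemma~\ref{lem:divergence-limit} again gives $\eps_n\log\rho(\gamma_n,\eps_n)\to I(C)$. The sign of $I(C)$ is negative for an attracting $C$ and positive for a repelling $C$. Hence the sign of $\log\rho$ eventually matches, and the orbit's stability eventually agrees with the normal stability of $C$.

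The one point needing care is that the limit in Lemma~\ref{lem:divergence-limit} applies to the full original subsequence. If the limit along a sequence were not unique, we would pass to sub-subsequences: every subsequence has a further subsequence with limit $I(C')\neq0$, and this is all the argument needs. I do not expect a real obstacle here, since all the analytic work (compactness, exclusion of canards, the divergence limit) is already done. The only subtlety is to make $c_F$ uniform over all orbits, including several orbits at the same $\eps$. The contradiction argument handles this automatically, because it selects one bad orbit per parameter value.
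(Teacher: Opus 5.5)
Your proposal is correct and is essentially the paper's own argument. You take a contradiction sequence with $0<\eps_n<1/n$ and $|\eps_n\log\rho(\gamma_n,\eps_n)|<1/n$, extract a convergent subsequence via Lemma~\ref{lem:singular-limit}, get commonness of the limit from Corollary~\ref{cor:all-common}, and reach the contradiction $0=I(C)\neq0$ from Lemma~\ref{lem:divergence-limit}, with hyperbolicity and the stability assertion following by exponentiation and the strict sign of $I(C)$.
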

\begin{proof}
If no such pair $(c_F,\eps_F)$ existed, then for every positive integer $n$ there would be $0<\eps_n<1/n$ and a periodic orbit $\gamma_n$ with $|\eps_n\log\rev{\rho(\gamma_n,\eps_n)}|<1/n$. Lemma~\ref{lem:singular-limit} supplies a subsequence converging to a nontrivial singular path $C$. No hyperbolicity or isolation assumption is needed for that lemma. Corollary~\ref{cor:all-common} makes $C$ common, and Lemma~\ref{lem:divergence-limit} gives
\[
0=\lim_{n\to\infty}\eps_n\log\rev{\rho(\gamma_n,\eps_n)}=I(C)\neq0,
\]
a contradiction. This proves~\eqref{eq:uniform-Floquet} simultaneously for all periodic orbits, rather than only for a chosen branch of cycles. The multiplier alternatives follow by exponentiation. Neither alternative permits the multiplier to equal one, which proves hyperbolicity. The assertion about stability along a convergent sequence follows from the strict sign of $I(C)$.
\end{proof}

\begin{corollary}[Exclusion of coalescence near $\eps=0$]\label{cor:no-coalescence}
Fix $F$ and let $\eps_F$ be as in Proposition~\ref{prop:uniform-hyperbolicity}. No two cycles can coalesce into a nonstationary periodic orbit at a parameter $\eps_*\in(0,\eps_F)$. In particular, there is no sequence of positive parameters tending to zero at which such coalescences occur for this fixed polynomial.
\end{corollary}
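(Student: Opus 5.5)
The plan is to argue by contradiction and use Proposition~\ref{prop:uniform-hyperbolicity} directly. A coalescence of two cycles at a parameter $\eps_*$ into a nonstationary periodic orbit $\gamma_*$ means the following. There are parameters $\eps_k\to\eps_*$ and pairs of distinct periodic orbits $\gamma_k^1\neq\gamma_k^2$ of~\eqref{eq:system} at $\eps_k$, both converging in the Hausdorff metric to $\gamma_*$, which is a periodic orbit at $\eps_*$. We first observe that $\gamma_*$ is indeed periodic at $\eps_*$ and not the origin: this is exactly what the hypothesis of a nonstationary limit provides. Then, since $\eps_*\in(0,\eps_F)$, Proposition~\ref{prop:uniform-hyperbolicity} says $\gamma_*$ is hyperbolic, with multiplier $\rho(\gamma_*,\eps_*)\neq1$.

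The key step is the standard implicit function argument on a Poincaré map. Take a transversal section $\Sigma$, for instance the positive $y$-axis, which every periodic orbit crosses exactly once by Section~\ref{sec:geometry}. Let $\Pi(p,\eps)$ be the return map, defined and smooth for $(p,\eps)$ near $(p_*,\eps_*)$ by smooth dependence on initial data and parameters. Periodic orbits near $\gamma_*$ correspond to zeros of $G(p,\eps)=\Pi(p,\eps)-p$. We have $\partial_pG(p_*,\eps_*)=\rho(\gamma_*,\eps_*)-1\neq0$. The implicit function theorem therefore gives neighborhoods $U\ni p_*$ and $V\ni\eps_*$ such that, for each $\eps\in V$, $G(\cdot,\eps)$ has exactly one zero in $U$.

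To conclude, Hausdorff convergence of $\gamma_k^i$ to $\gamma_*$ gives that their axis intersections $p_k^i$ converge to $p_*$. This holds because each orbit meets the positive $y$-axis once, and Hausdorff closeness forces the intersection points to be close. Hence for large $k$ both $p_k^1$ and $p_k^2$ lie in $U$ with $\eps_k\in V$. But $p_k^1\neq p_k^2$, since distinct nested orbits have distinct axis heights. This contradicts uniqueness.

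The second sentence of the corollary follows at once: any sequence of coalescence parameters tending to zero eventually lies in $(0,\eps_F)$, which the first part excludes. The only delicate point is the convergence of the section intersections. I would handle it by noting that near $\gamma_*$ the flow is transverse to the axis, so a cycle Hausdorff-close to $\gamma_*$ crosses the axis within a small neighborhood of $P_*$.
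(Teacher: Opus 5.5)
Your argument is correct and is essentially the paper's proof. Both use a smooth local return map near $\gamma_*$, defined for nearby parameters, together with $\rho(\gamma_*,\eps_*)\neq1$ from Proposition~\ref{prop:uniform-hyperbolicity}; the paper gets the contradiction from Rolle's theorem on the displacement (a zero of $\partial_pG$ between the two coalescing fixed points forces $\rho(\gamma_*,\eps_*)=1$ in the limit), while you use the uniqueness part of the implicit function theorem, which is the same mechanism.
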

\begin{proof}
At a positive parameter $\eps_*$, a regular periodic orbit admits a common smooth local return map for nearby parameters. If two fixed points coalesced, Rolle's theorem applied to the return displacement would give \rev{$\rho(\gamma_*,\varepsilon_*)=1$} at the limiting fixed point \rev{corresponding to the periodic orbit $\gamma_*$}. This contradicts~\eqref{eq:uniform-Floquet}. The same estimate excludes all other nonhyperbolic periodic orbits in this interval, without requiring nondegenerate fold conditions.
\end{proof}

\begin{remark}\label{rem:varying-F}
Both $c_F$ and $\eps_F$ are allowed to depend on the entire polynomial. The equal-fold argument, for example, uses a nonzero coefficient of $\eps^{2/3}$ or $\eps\log\eps$ for that fixed polynomial. These coefficients may approach zero as the polynomial varies. The proposition says nothing about collisions at larger positive parameters or about a family in which the coefficients are adjusted together with $\eps$.
\end{remark}

\section{Proof of the main theorem}\label{sec:proof}
For the counting argument it is unnecessary to assign a distinct pair of folds to every singular limit. A common path may make more than one jump in the same half-plane. What is needed is a shared regular arc for limits of the same type.

\begin{lemma}\label{lem:shared-arc}
Assume $a_5>0$. Any two attracting common limits of periodic orbits share a positive-length regular attracting critical arc. Any two repelling common limits share a positive-length regular repelling critical arc.
\end{lemma}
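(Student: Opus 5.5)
We use the geometry of $\mathcal{C}_0$ for $a_5>0$ together with the structure of common limits from Lemma~\ref{lem:singular-limit} and Corollary~\ref{cor:all-common}. Treat the attracting case. The repelling case follows by reversing time and replacing $(y,F)$ by $(-y,-F)$; this exchanges the two types but flips the sign of $a_5$, so the repelling statement has to be checked with the mirrored geometry. Since $a_5>0$, we have $F'(x)>0$ for $|x|$ large. Hence the critical curve has an unbounded attracting outer branch on the right, $x\geq x_R^*$, where $x_R^*$ is the largest real zero of $F'$ (or $x_R^*=0$ if $F'$ has no positive zero). Likewise there is an outer attracting branch on the left, $x\leq x_L^*$, where $x_L^*$ is the smallest zero. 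The first goal is to show that every attracting common limit $C$ contains a positive-length subarc of one of these two outer branches, and in fact of a specific one.

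\emph{Step 1.} We locate the rightmost point of $C$. By Lemma~\ref{lem:singular-limit}, the limit of the right half-graph $x_+$ is unimodal, and its maximum $a$ is attained on the limit path. We argue that a neighbourhood of this maximum lies on a regular critical arc, or else at a fold where a horizontal layer orbit arrives from the left. In the attracting common case a horizontal segment arriving from the left must land on an attracting root of $F(x)=h$. The rightmost point is a root $x$ with $F'(x)\geq0$ and no larger root on the layer line, since a larger root would force the orbit to continue further right. We then use the directed-integral description: in $x>0$, the slow flow on an attracting arc has $\dot x=-x/F'(x)<0$. Therefore the maximum point is the upper-right endpoint of an attracting arc descending leftward. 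This gives a positive-length attracting arc $\Gamma_C$ starting at $x=a_C$.

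\emph{Step 2.} We compare two limits $C_1,C_2$ with maxima $a_1\leq a_2$. The key claim is that the attracting arc of $C_2$ through its rightmost point descends along the same monotone branch of $F$ and passes through $x=a_1$, or at least overlaps $\Gamma_{C_1}$ in a positive-length piece. For this we use nesting of the approximating periodic orbits: distinct periodic orbits at the same $\eps$ are nested. Since the two orbits do not cross, the inner orbit's right arc lies between the outer orbit's right arc and the axis. In the limit, $\Gamma_{C_1}$ therefore lies to the left of or on the outer path. Along the attracting branch between the two rightmost points, the outer orbit cannot jump off while staying to the right of $\Gamma_{C_1}$. A jump from an attracting branch occurs only at a fold, that is a local minimum of $F$ in $x>0$. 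Such a fold between $a_1$ and $a_2$ would make the inner path's arc lie strictly outside the outer one, which is a contradiction. Since the two limits may be taken along different $\eps$-sequences, we first pass to a common subsequence; nesting at each fixed $\eps$ is then what supplies the comparison.

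\emph{Main obstacle.} The hardest step is making the nesting comparison rigorous in the degenerate situations. These include stationary inflections on the branch, a limit whose rightmost attracting arc has zero length because it is a single fold point, and the case where the shared arc must be sought in the left half-plane instead. I would first try the degree-five count: for $a_5>0$, $F'$ has at most four real zeros. A case split on their number (zero, two, or four) and their signs leaves only finitely many configurations of attracting branches. In each configuration I would check directly that two common attracting paths both use the outer right branch, or both use the outer left branch, near their extreme points. When a potential shared arc degenerates to a point, I would invoke Lemma~\ref{lem:singular-limit}'s guarantee of a positive-length regular arc in each half-plane and run the argument in the other half-plane.
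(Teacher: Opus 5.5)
Your Step~1 is essentially right, but there is a genuine gap in Step~2, and the repelling case is not proved.

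\textbf{Nesting is unavailable.} The lemma concerns \emph{any} two common limits. They may arise along unrelated sequences $\eps_n\to0^+$ and $\eps'_m\to0^+$ with no common terms. There is then no ``common subsequence'', and periodic orbits at different parameters are not nested.

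\textbf{Nesting would not give the contradiction anyway.} Even for limits along the same sequence, right-half-plane nesting does not yield the contradiction you claim. Take $F'$ with simple zeros $x_1<0<x_2<x_3<x_4$, so the attracting intervals are $(-\infty,x_1)$, $(x_2,x_3)$ and $(x_4,\infty)$. Imagine an inner limit with $a_1\in(x_2,x_3)$ and an outer one with $a_2>x_4$ that drops off at the fold $x_4$, with $F(x_4)<F(x_2)$.
\begin{itemize}
\item Then $\Gamma_{C_1}$ lies at heights above $F(x_4)$, where the outer path sits at $x>x_4$. The inner arc is inside the outer path, not outside it, so nesting is respected.
\item What actually excludes this picture happens in the \emph{left} half-plane. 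Both paths must use $(-\infty,x_1)$ there, follow it to the fold $x_1$, and hence make the same upper jump at height $F(x_1)$. This forces $a_1=a_2$.
\item A comparison confined to the right half-plane cannot detect this.
\end{itemize}

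\textbf{The missing mechanism.} It is what you defer to the ``main obstacle'' paragraph, and it needs no nesting at all.
\begin{itemize}
\item Join monotonicity intervals across stationary inflections. A common path, in reversed time for the repelling type, that enters an interval of its type must follow it to its endpoint toward the origin. The reason is that in the chosen time direction the layer field points toward regular points and stationary inflections from both sides.
\item An interval containing the origin in its interior is unusable, by Lemma~\ref{lem:origin-odd}.
\item For $a_5>0$ there are at most three attracting and at most two repelling intervals. Hence some half-plane contains at most one usable attracting interval.
\item Lemma~\ref{lem:singular-limit} gives every limit a regular arc in each half-plane. So every attracting common limit uses that single interval, and all of them share its terminal piece near the endpoint toward the origin.
\item For the repelling type, each limit needs a repelling interval in each half-plane. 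Since there are at most two, every repelling common limit uses both.
\end{itemize}

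\textbf{The repelling case is not a mirror image.} Your symmetry flips the sign of $a_5$. For $a_5>0$ the repelling intervals are bounded and there is no outer repelling branch. So appealing to ``mirrored geometry'' leaves the repelling half of the statement unproved.
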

\begin{proof}
Join monotonicity intervals across stationary inflections, that is, across even-multiplicity zeros of $F'$. Call the resulting maximal intervals attracting or repelling according to the sign of $F'$ at their regular points. Since $F'$ is quartic with positive leading coefficient, there are at most three attracting intervals and at most two repelling intervals. This count includes all possibilities for multiple real roots.

Consider a common path of either type, using reversed time for the repelling type. On an interval of its type the reduced motion then proceeds toward the origin. The path cannot leave the critical curve at a regular interior point or at a stationary inflection: in the chosen time direction the layer vector field points toward that critical point on both sides. Thus, after entering such an interval, the path follows it to its endpoint toward the origin, unless it approaches the origin in the interior of that interval.

An interval containing the origin in its interior cannot be used. Indeed, the first nonzero term of $F$ at the origin then has odd degree; continuation toward the origin would contradict the fixed excluded neighborhood in Lemma~\ref{lem:origin-odd}. Every remaining interval lies entirely in one open half-plane, apart from a possible endpoint at the origin. Two paths using the same interval therefore share a positive-length regular subarc near its endpoint toward the origin. This remains true when the endpoint is a degenerate extremum or the origin, since the subarc can be chosen away from that endpoint and the finitely many stationary inflections.

For the attracting type, at least one half-plane contains at most one available interval: otherwise four attracting intervals would be required. Every attracting common limit has a regular slow arc in both half-planes, by Lemma~\ref{lem:singular-limit}. If the chosen half-plane has no available interval, no such limit exists; otherwise every such limit uses its unique available interval. The preceding argument supplies a shared regular subarc.

For the repelling type there are at most two available intervals in total. Existence of a repelling common limit requires one in each half-plane. Every repelling common limit therefore uses both intervals, and these limits likewise share a regular subarc.
\end{proof}

\begin{proof}[Proof of Theorem~\ref{thm:main}]
Suppose that a sequence $\eps_n\to0^+$ admits at least three limit cycles for each $n$. By Proposition~\ref{prop:uniform-hyperbolicity}, all periodic orbits are hyperbolic when $n$ is large.

Among three nested cycles, two have the same stability. Let $A$ be the closed annulus bounded by these two cycles. It is compact, invariant in both time directions, and excludes the origin. If a forward orbit in $A$ remained in one closed half-plane, $y(t)$ would be monotone and converge to a finite value $h$. Its nonempty omega-limit set would be invariant and contained in $y=h$. Invariance and $\dot y=-\eps x$ would force $x=0$ throughout that set, and then $\dot x=h$ would force $h=0$. This would put the origin in $A$, a contradiction. Every orbit therefore completes each half-turn. The axis crossings are transverse, since $y\neq0$ there.

Consequently the first-return map on the positive-axis interval is defined everywhere, strictly increasing, and analytic. It extends locally across either endpoint, and at a fixed point its derivative is the Floquet multiplier. All zeros of its displacement are simple by Proposition~\ref{prop:uniform-hyperbolicity}; in particular they cannot accumulate on this compact interval. Immediately inside the two same-type fixed points the displacement has opposite signs. Some intervening simple zero therefore has the opposite stability. Thus we can select three nested cycles
\[
\gamma_{1,n}\subset\Int\gamma_{2,n}\subset\Int\gamma_{3,n}
\]
whose inner and outer members have the same stability and whose middle member has the opposite stability. No global finiteness theorem for limit cycles is needed. Pass to a subsequence so that these three stability types are fixed.

By Lemma~\ref{lem:singular-limit}, extract simultaneous singular limits $C_1,C_2,C_3$. Corollary~\ref{cor:all-common} and Lemma~\ref{lem:divergence-limit} show that $C_1,C_3$ have the same type and $C_2$ the opposite type. By Lemma~\ref{lem:shared-arc}, the first and third limits share a regular critical arc. Choose a compact subarc corresponding to a nondegenerate height interval $J$, and write it as $x=\chi(y)$. Suppose it lies in the right half-plane. Nesting gives, for large $n$ and $y\in J$,
\[
x_{1,+,n}(y)\leq x_{2,+,n}(y)\leq x_{3,+,n}(y).
\]
The two outer graphs converge uniformly to $\chi$. Hence the middle graph also converges uniformly to the same arc. In the left half-plane the inequalities reverse and the conclusion is unchanged. Since $C_2$ is common, containing this arc forces it to have the same type as $C_1$ and $C_3$, a contradiction.

There can be no such sequence of parameters. Taking the resulting counting threshold smaller than $\eps_F$ proves Theorem~\ref{thm:main}. In particular, the simplicity of the fixed points used here follows from the uniform estimate already proved; it was not imposed on the cycles in the initial compactness argument.
\end{proof}

\section{Acknowledgment}
This work is supported by NSF of China (Numbers 12571190, 12322109) and Hunan Basic Science Research Center for Mathematical Analysis (2024JC2002).

\end{document}